\newtheorem{thm}{Theorem}[section]
\newtheorem{cor}[thm]{Corollary}
\newtheorem{prop}[thm]{Proposition}
\newtheorem{lem}[thm]{Lemma}
\newtheorem{claim}[thm]{Claim}
\newtheorem{quest}[thm]{Question}
\newtheorem{conj}[thm]{Conjecture}
\newtheorem{obs}[thm]{Observation}
\theoremstyle{definition}
\setlist[enumerate]{itemsep=2ex, topsep=2ex} 
\setlist[itemize]{itemsep=2ex, topsep=2ex}
\newcommand{\E}{\mathbb{E}}
\newcommand{\al}{\alpha}
\newcommand{\be}{\beta}
\newcommand{\Gam}{\Gamma}
\newcommand{\sig}{\sigma}
\newcommand{\ep}{\epsilon}
\newcommand{\Om}{\Omega}
\newcommand{\del}{\delta}
\renewcommand{\l}{\left}
\renewcommand{\r}{\right}
\newcommand{\half}{\frac{1}{2}}
\newcommand{\sub}{\subseteq}
\renewcommand{\c}[1]{\mathcal{#1}}
\newcommand{\tr}[1]{\textrm{#1}}
\newcommand{\rec}[1]{\frac{1}{#1}}
\newcommand{\f}[2]{\frac{#1}{#2}}
\newcommand{\floor}[1]{\l\lfloor #1\r\rfloor}
\newcommand{\ceil}[1]{\l\lceil #1\r\rceil}
\newcommand{\tu}[1]{\textup{(#1)}}
\renewcommand{\S}{\mathfrak{S}}
\newcommand{\perm}{\mathfrak{S}_{m,n}}
\newcommand{\lm}{\mathcal{L}_{m}^{1}}
\renewcommand{\l}{\mathcal{P}_2}
\newcommand{\hm}{h_m(n)}
\newcommand{\wc}{\mathcal{N}(n,m)}
\newcommand{\roots}{\alpha_1,\ldots,\alpha_m}
\renewcommand{\l}{\left}
\title{Continuously Increasing Subsequences of Random Multiset Permutations}
\author{ Alexander Clifton\footnote{Dept.\ of Mathematics, Emory University {\tt aclift2@emory.edu}} \and Bishal Deb\footnote{Dept.\ of Mathematics,  University College London {\tt bishal.deb.19@ucl.ac.uk}} \and Yifeng Huang\footnote{Dept.\ of Mathematics, University of Michigan {\tt huangyf@umich.edu}} \and Sam Spiro\footnote{Dept.\ of Mathematics, UCSD {\tt sspiro@ucsd.edu}. This material is based upon work supported by the National Science Foundation Graduate Research Fellowship under Grant No. DGE-1650112.} \and Semin Yoo\footnote{School of Computational Sciences, KIAS {\tt syoo19@kias.re.kr}. The author is supported by the KIAS Individual Grant (CG082701) at
Korea Institute for Advanced Study.}}
\date{\today}
\begin{document}
	\maketitle
	\begin{abstract}
		For a word $\pi$ and integer $i$, we define $L^i(\pi)$ to be the length of the longest subsequence of the form $i(i+1)\cdots j$, and we let $L(\pi):=\max_i L^i(\pi)$.   In this paper we estimate the expected values of $L^1(\pi)$ and $L(\pi)$ when $\pi$ is chosen uniformly at random from all words which use each of the first $n$ integers exactly $m$ times.  We show that $\E[L^1(\pi)]\sim m$ if $n$ is sufficiently larger in terms of $m$ as $m$ tends towards infinity,  confirming a conjecture of Diaconis, Graham, He, and Spiro.  We also show that $\E[L(\pi)]$ is asymptotic to the inverse gamma function $\Gamma^{-1}(n)$ if $n$ is sufficiently large in terms of $m$ as $m$ tends towards infinity.
		
	\end{abstract}

	\section{Introduction}
	\subsection{Main Results}
	
	Given integers $m$ and $n$, let $\S_{m,n}$ denote the set of words $\pi$ which use each integer in $[n]:=\{1,2,\ldots,n\}$ exactly $m$ times, and we will refer to $\pi\in \S_{m,n}$ as a \textit{multiset permutation}.  For example, $\pi=211323\in \S_{2,3}$.  For $\pi\in \S_{m,n}$ and $i$ an integer, we define $L^i_{m,n}(\pi)$ to be the length of the longest subsequence of $\pi$ of the form $i(i+1)(i+2)\cdots j$, which we call an \textit{$i$-continuously increasing subsequence}.  We say that a subsequence is a \textit{continuously increasing subsequence} if it is an $i$-continuously increasing subsequence for some $i$, and we define $L_{m,n}(\pi)=\max_i L^i_{m,n}(\pi)$ to be the length of a longest continuously increasing subsesquence of $\pi$.  For example, if $\pi=2341524315$ then $L_{2,5}(\pi)=L^2_{2,5}(\pi)=4$ due to the subsequence $2345$, and $L^1_{2,5}(\pi)=3$ due to the subsequence $123$.

	The focus of this paper is to study $L^1_{m,n}(\pi)$ and $L_{m,n}(\pi)$ when $\pi$  is chosen uniformly at random from $\S_{m,n}$.  We focus on the regime where $n$ is much larger than $m$, as in the opposite regime $L^i_{m,n}(\pi)$ is very likely to be its maximum possible value $n-i+1$ for all $i$.
	
	We first consider $\E[L^1_{m,n}(\pi)]$.  This quantity was briefly studied by Diaconis, Graham, He, and Spiro~\cite{DGHS21} due to its relationship with a certain card game that we describe later in this paper.  They proved $\E[L^1_{m,n}(\pi)]\le m+Cm^{3/4}\log m$ for some absolute constant $C$ provided $n$ is sufficiently large in terms of $m$.
	It was conjectured in \cite{DGHS21} that this upper bound for $\E[L^1_{m,n}(\pi)]$ is asymptotically tight for $n$ sufficiently large in terms of $m$.  We verify this conjecture in a strong form, obtaining an exact formula for $\lim_{n\to \infty} \E[L^1_{m,n}(\pi)]$ for any fixed $m$ and precise estimates of this value as $m$ tends towards infinity.
	
	\begin{thm}\label{thm:C1}
	~
	\begin{itemize}
	    \item[\tu{a}] For any integer $m\ge 1$, let $\al_1,\ldots,\al_m$ be the zeroes of $E_m(x):=\sum_{k=0}^m \f{x^k}{k!}$.  If $\pi\in \S_{m,n}$ is chosen uniformly at random, then
       \begin{equation}\label{expeq}
    \lm:=\lim_{n\to \infty} \E[L_{m,n}^1(\pi)]=-1-\sum \al_i^{-1} e^{-\al_i}.
    \end{equation}
    \item[\tu{b}] There exists an absolute constant $\be>0$ such that
		\[\l|\mathcal{L}_m^1-\l(m+1-\rec{m+2}\r)\r|\le O(e^{-\beta m}).\]
	\end{itemize}

	\end{thm}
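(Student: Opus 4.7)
Both parts derive from a renewal-theoretic reformulation of $\lm$. For $n \ge j$, the event $\{L^1_{m,n}(\pi) \ge j\}$ depends only on the subword of $\pi$ restricted to the alphabet $\{1, \ldots, j\}$, which is uniformly random in $\mathfrak{S}_{m,j}$; passing to the continuous limit in which the positions of each integer become i.i.d.\ uniform on $[0,1]$ and analyzing the greedy extraction of a $12\cdots j$ subsequence, the joint density of the successive gaps $D_i$ on the success event is $\prod_i m(1-d_i)^{m-1}$ on the simplex $\{d_i \ge 0,\ \sum d_i \le 1\}$. Thus, letting $\tilde D_1, \tilde D_2, \ldots$ be i.i.d.\ with density $k(d) := m(1-d)^{m-1}$ on $[0, 1]$,
\[\lm = \sum_{j \ge 1} \Pr[\tilde D_1 + \cdots + \tilde D_j \le 1] = U(1),\]
where $U$ is the renewal function for interarrival density $k$; the renewal density $u := U'$ satisfies $u = k + u*k$.

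\textbf{Part (a).} The plan is to show $u(t) = S(t) := \sum_{i=1}^m e^{-\alpha_i t}$ on $[0, 1]$. Since $k$ is a polynomial of degree $m-1$, $k^{(m)} \equiv 0$. The standard convolution-derivative rule $(u * k)^{(n)}(t) = \sum_{j=0}^{n-1} u^{(n-1-j)}(t)\, k^{(j)}(0) + (u * k^{(n)})(t)$, applied to the renewal equation at $n = m$, collapses to $u^{(m)}(t) = \sum_{j=0}^{m-1} u^{(m-1-j)}(t)\, k^{(j)}(0)$; substituting $k^{(j)}(0) = (-1)^j m!/(m-1-j)!$ and rearranging identifies this as
\[E_m(-D)\, u(t) \;=\; \sum_{i=0}^m \frac{(-D)^i}{i!}\, u(t) \;=\; 0 \qquad (t \in [0, 1]),\]
where $D = d/dt$. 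Since $E_m(-D) e^{-\alpha_i t} = E_m(\alpha_i) e^{-\alpha_i t} = 0$, both $u$ and $S$ lie in the $m$-dimensional kernel of $E_m(-D)$; to identify them, I would match the initial conditions $u^{(k)}(0) = S^{(k)}(0) = (-1)^k p_k$ for $k = 0, \ldots, m-1$, where $p_k = \sum_i \alpha_i^k$. Specifically, the recursion $u^{(n)}(0) = k^{(n)}(0) + \sum_{j=0}^{n-1} u^{(n-1-j)}(0)\, k^{(j)}(0)$ (obtained by evaluating the differentiated renewal equation at $0$) reduces, after substituting the closed-form values of $k^{(j)}(0)$ and of the elementary symmetric functions $e_j = (-1)^j m!/(m-j)!$ read off from $E_m$, to Newton's identity for the power sum $p_n$. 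Thus $u \equiv S$ on $[0, 1]$, and integrating while using the identity $\sum_i 1/\alpha_i = -1$ (from $E_m(x) = \prod_i(1 - x/\alpha_i)$ with $E_m(0) = E_m'(0) = 1$) gives the stated formula $\lm = -1 - \sum_i e^{-\alpha_i}/\alpha_i$.

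\textbf{Part (b).} From the moments $\mu = \E[\tilde D] = 1/(m+1)$ and $\mu_2 = \E[\tilde D^2] = 2/((m+1)(m+2))$, a routine expansion of $\hat u(s) = \hat k(s)/(1 - \hat k(s))$ gives
\[\hat u(s) = \frac{m+1}{s} - \frac{1}{m+2} + O(s) \qquad (s \to 0),\]
which, once the exponential decay of $u(t) - (m+1)$ is established, yields $\int_0^\infty (u(t)-(m+1))\, dt = -1/(m+2)$. Hence
\[\lm = U(1) = (m+1) + \int_0^1 (u(t) - (m+1))\, dt = (m+1) - \tfrac{1}{m+2} - \int_1^\infty (u(t) - (m+1))\, dt,\]
and the desired bound reduces to showing $\left|\int_1^\infty (u(t) - (m+1))\, dt\right| = O(e^{-\beta m})$. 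The exponential decay of $u - (m+1)$ follows from inverse Laplace: the nonzero poles of $\hat u(s)$ occur at $s^* = -w$, where $w \ne 0$ satisfies $e^w = E_m(w)$, i.e., $w$ is a nontrivial zero of the exponential remainder $R_m(w) := e^w - E_m(w) = \sum_{k > m} w^k/k!$ (using the closed form $\hat k(s) = m!(e^{-s} - E_{m-1}(-s))/(-s)^m$).

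The principal obstacle is the quantitative estimate $\mathrm{Re}(w) \ge \beta m$ for every nontrivial zero $w$ of $R_m$, with some absolute $\beta > 0$. This is the classical theorem (Szeg\H{o}, Buckholtz) that the zeros of $R_m$ normalized by $1/m$ accumulate on the outer arc $\{z : |ze^{1-z}| = 1,\ |z| \ge 1\}$ of the Szeg\H{o} curve, which lies entirely in $\{\mathrm{Re}(z) \ge 1\}$; quantitative versions give $\mathrm{Re}(w)/m \ge 1 - O(\log m / m)$ uniformly. Combined with Jensen's formula (polynomially many zeros in any disk of radius $O(m)$) and a tail estimate for zeros of large imaginary part, inverse Laplace produces $|u(t) - (m+1)| \le \mathrm{poly}(m) \cdot e^{-\beta m t}$, which integrates to the claimed $O(e^{-\beta m})$ error.
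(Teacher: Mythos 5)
Your part (a) is correct and takes a genuinely different route from the paper. The paper begins with the Horton--Kurn enumeration of $\pi\in\permmn{m}{n}$ having a complete increasing subsequence, packages it as a generating function, introduces the divided-power operator $\Phi(x^n)=x^n/n!$, and extracts the answer by partial fractions on $Q_m(x,1)=1/(m!x^m E_m(1/x))$. You instead realize a uniform element of $\permmn{m}{k}$ by i.i.d.\ uniform marks on $[0,1]$, observe that the successive greedy gaps have conditional density $m(1-d)^{m-1}$ independently of the past (so that $\mathcal{L}_m^1=U(1)$ for the renewal function with interarrival density $k(d)=m(1-d)^{m-1}$), differentiate $u=k+u*k$ exactly $m$ times using $\deg k=m-1$ to get $E_m(-D)u=0$ on $(0,1)$, and match initial conditions by recognizing the recursion for $u^{(n)}(0)$ as Newton's identity for the power sums $\sum\alpha_i^n$. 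I have checked the gap-density claim, the ODE rearrangement (the sign bookkeeping does give $E_m(-D)$), and the Newton match for small $n$, and it all holds. Your route has the merit of not relying on Horton--Kurn's formula as a black box, at the cost of the renewal-theoretic setup.

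Part (b) has a real gap. Your inverse-Laplace route requires $\mathrm{Re}(w)\ge\beta m$, for an absolute $\beta>0$, for every nontrivial zero $w$ of $R_m=e^x-E_m$. You attribute this to ``Szeg\H{o}, Buckholtz,'' but the Szeg\H{o}/Conrey--Ghosh results the paper actually cites (Proposition 2.4) concern the zeros of the \emph{partial sum} $E_m$, not the zeros of the remainder $R_m$; the corresponding quantitative statement for $R_m$'s zeros is the crux of your argument, yet it is neither stated precisely nor sourced. Even granting it, moving the Bromwich contour past infinitely many poles --- bounding the residues $-1/\hat k'(s^*)$, bounding $\hat u$ on the shifted line, and carrying out the Jensen-type pole count you allude to --- is substantial unexecuted work, not a routine afterthought. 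There is also an architectural inefficiency: you never use your part (a) in part (b). Once you know $u(t)=\sum_i e^{-\alpha_i t}$ on $[0,1]$, you have $\mathcal{L}_m^1=-1-\sum_i\alpha_i^{-1}e^{-\alpha_i}$ as a \emph{finite} sum over the $m$ zeros of $E_m$. The paper estimates this sum directly: it splits the $\alpha_i$ by real part, writes $e^{-\alpha_i}=1/R_m(\alpha_i)$, uses the convergent Conrey--Ghosh expansion of $1/R_m$ on $|x|<(m+2)/2$ (a local property of $R_m$, available without knowing where its zeros are), and invokes Zemyan's identities for $\sum\alpha_i^{-t}$. That entirely avoids the global inverse-Laplace analysis on which your sketch depends; you should consider pivoting your part (b) to that finite sum once part (a) is in hand.
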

	For example, when $m=1$ we have $E_1(x)=1+x$ and $\al_1=-1$, implying $\mathcal{L}_1^1=-1+e$, which can also be proven by elementary means.  For $m=2$ we have $E_2(x)=1+x+x^2/2$ and $\al_1=-1-i,\al_2=-1+i$.  From this Theorem~\ref{thm:C1}(a) gives the following closed form expression for $\c{L}_2^1$.
\begin{cor} 
\[
\mathcal{L}_{2}^{1}=e\left(\cos(1)+\sin(1)\right)-1.
\]
\end{cor}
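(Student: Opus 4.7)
The proof is a direct computation: I plug $m=2$ into the formula from Theorem~\ref{thm:C1}(a) and simplify, exploiting the fact that the two zeros of $E_2(x)$ form a complex conjugate pair so that the sum in \eqref{expeq} is manifestly real.

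First, I verify by the quadratic formula that the roots of $E_2(x) = 1 + x + x^2/2$ are $\alpha_1 = -1 - i$ and $\alpha_2 = -1 + i$, so that $\alpha_1^{-1} = \tfrac{-1+i}{2}$ and $\alpha_2^{-1} = \tfrac{-1-i}{2}$. Then I compute $e^{-\alpha_1} = e^{1+i} = e(\cos 1 + i \sin 1)$ and $e^{-\alpha_2} = e^{1-i} = e(\cos 1 - i \sin 1)$ using Euler's formula.

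Next, I multiply out:
\[
\alpha_1^{-1} e^{-\alpha_1} = \frac{e}{2}(-1+i)(\cos 1 + i\sin 1) = \frac{e}{2}\bigl[(-\cos 1 - \sin 1) + i(\cos 1 - \sin 1)\bigr],
\]
and since $\alpha_2^{-1} e^{-\alpha_2}$ is the complex conjugate of this, the two imaginary parts cancel and the real parts add to give
\[
\alpha_1^{-1} e^{-\alpha_1} + \alpha_2^{-1} e^{-\alpha_2} = -e(\cos 1 + \sin 1).
\]
Substituting into \eqref{expeq} yields $\mathcal{L}_2^1 = -1 - \bigl(-e(\cos 1 + \sin 1)\bigr) = e(\cos 1 + \sin 1) - 1$, as claimed.

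There is no real obstacle here; the whole content of the corollary is the cancellation arising from conjugation of the two roots. The only minor point worth being careful about is tracking signs when inverting $-1 \pm i$ and when multiplying the complex constants, but this is routine algebra rather than a substantive step.
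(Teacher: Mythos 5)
Your computation is correct and is exactly the intended derivation: the paper states the roots $\alpha_{1,2}=-1\mp i$ and leaves the substitution into \eqref{expeq} implicit, and you carry out precisely that substitution, using the conjugate-pair cancellation. Nothing to add.
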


	Our next result gives precise bounds for the length of a longest continuously increasing subsequence in a random permutation of $\S_{m,n}$.  We recall that the gamma function $\Gam(x)$ is a function which, in particular, gives a bijection from $x\ge 1$ to $y\ge 1$ and which satisfies $\Gam(n)=(n-1)!$ for non-negative integers $n$.  
	\begin{thm}\label{thm:C}
		If $n$ is sufficiently large in terms of $m$, then
		\[\E[L_{m,n}(\pi)]=\Gam^{-1}(n)+\Theta\l(1+\f{\log m}{\log(\Gam^{-1}(n))} \Gam^{-1}(n)\r),\]
		where $\Gam^{-1}(n)$ is the inverse of the gamma function when restricted to $x\ge 1$.
	\end{thm}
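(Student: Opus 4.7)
Set $K := \Gam^{-1}(n)$ and, for each integer $k \geq 1$, define
\[
p_k := \P\l(L^1_{m,k}(\pi') = k\r), \qquad \pi' \sim \mathrm{Unif}(\S_{m,k}).
\]
Restricting a uniformly random $\pi \in \S_{m,n}$ to the positions occupied by the labels $i, i+1, \ldots, i+k-1$ produces a uniformly random element of $\S_{m,k}$, so $\P(L^i_{m,n}(\pi) \geq k) = p_k$ for every $1 \leq i \leq n-k+1$. Together with $\E[L_{m,n}] = \sum_k \P(L_{m,n} \geq k)$, the theorem reduces to sharp two-sided estimates on $\P(L_{m,n} \geq k)$. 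The key is to study the random variable $N(\pi)$ counting the $12\cdots k$ subsequences of $\pi$.

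\textbf{Upper bound.} Choosing $k$ positions out of $mk$ and computing the fraction of elements of $\S_{m,k}$ assigning the labels $1,2,\ldots,k$ to those positions in order yields $\E[N] = m^k/k!$; hence $p_k \leq m^k/k!$ by Markov and $\P(L_{m,n} \geq k) \leq n m^k/k!$ by a union bound. Set
\[
k^+ = K + C\l(1 + \tfrac{K \log m}{\log K}\r)
\]
for a large constant $C > 0$. A Stirling computation using $\Gam(K) = n$ gives $n m^{k^+}/(k^+)! \leq 1$, and since $m/(k+1) \leq 1/2$ throughout the tail (for $K$ large relative to $m$), geometric summation gives $\sum_{k > k^+} n m^k/k! = O(1)$. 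Thus $\E[L_{m,n}] \leq k^+ + O(1) = K + O\l(1 + K\log m / \log K\r)$.

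\textbf{Lower bound.} The first task is a matching lower bound $p_k \geq c_m \min(1, m^k/k!)$ via the second moment of $N$. Partition ordered pairs $(T_1, T_2)$ of $12\cdots k$ subsequences by their set $S \subseteq [k]$ of shared labels: the disjoint pairs ($S = \emptyset$) contribute $(m(m-1))^k/(k!)^2 \leq \E[N]^2$, the fully shared pairs ($S = [k]$) contribute $\E[N]$, and for each intermediate $S$ the contribution equals the number of linear extensions of the ``two-chain'' poset (two chains of length $k$ sharing the positions in $S$) divided by $(2k-|S|)!$. Bounding these extension counts by $\binom{2(k-|S|)}{k-|S|}$ and carefully summing yields $\E[N^2] = O_m(\E[N] + \E[N]^2)$; Paley--Zygmund then gives $p_k \geq c_m \min(1, m^k/k!)$. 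Now set $k^- = K + c\l(1 + K\log m/\log K\r)$ for $c > 0$ sufficiently small, and define $M_k := \sum_{i=1}^{n-k+1} \1[L^i_{m,n} \geq k]$, so $\E[M_k] = (n-k+1) p_k$. For pairs $(i,j)$ with $|i-j| \geq k$, the label sets $\{i,\ldots,i+k-1\}$ and $\{j,\ldots,j+k-1\}$ are disjoint, so the events $\{L^i \geq k\},\{L^j \geq k\}$ are independent and contribute $\leq \E[M_k]^2$ to $\E[M_k^2]$; the remaining $O(nk)$ overlapping pairs each contribute at most $p_k$, giving $\E[M_{k^-}^2] \leq \E[M_{k^-}]^2 + 2k^- \E[M_{k^-}]$. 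Chebyshev then forces $\P(L_{m,n} \geq k^-) \to 1$, and since $\E[L_{m,n}] \geq k^- \P(L_{m,n} \geq k^-)$, we obtain $\E[L_{m,n}] \geq K + \Omega\l(1 + K \log m / \log K\r)$, matching the upper bound up to constants.

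\textbf{Main obstacle.} The chief difficulty is the second-moment lower bound on $p_k$: the number of linear extensions $E_S$ of the two-chain poset depends on the specific pattern of $S$, not merely $|S|$, so summing the intermediate contributions cleanly in $m$ requires a careful combinatorial identity or generating-function manipulation. Tracking the $m$-dependence of the constant $c_m$ through the Stirling calibrations -- so that the resulting slack in the threshold $k^-$ is absorbed by the $\Theta(1 + K\log m/\log K)$ error -- is the most delicate point of the argument.
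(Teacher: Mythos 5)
Your proof is correct in broad outline, and the upper bound is essentially the paper's argument (first moment plus union bound over the at-most-$n$ starting points, then calibrate $k^+$ via Stirling). The lower bound, however, takes a genuinely different route from the paper, and the comparison is worth spelling out.

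The paper lower-bounds $p_k = \Pr[L_{m,k} = k]$ by applying the Bonferroni inequality not to \emph{all} labelings $x \in [m]^k$, but only to a code $T \subseteq [m]^k$ of minimum Hamming distance $\delta$; a Gilbert--Varshamov-style greedy bound provides a large such $T$, and the distance guarantee controls the cross terms $\Pr[A_x \cap A_y] \le 1/(\delta!\, k!)$. This gives $p_k \ge (m/1.03)^k/(2k \cdot k!)$. Amplification is then done by splitting $[n]$ into $\lfloor n/k \rfloor$ disjoint length-$k$ blocks, using exact independence across blocks, and applying $1-(1-p_k)^{\lfloor n/k\rfloor}$. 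You instead run Paley--Zygmund over all $m^k$ labelings and track the pairwise correlations via the two-chain poset, then amplify with the shift-count $M_k$ and Chebyshev using independence only for starting points at distance $\ge k$. Both amplification steps work and are of comparable difficulty; your version is slightly more flexible in that it never wastes a factor from rounding to disjoint blocks, while the paper's disjoint-block version has the cleaner feature of exact independence.

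The step you flag as the ``main obstacle'' is the real content of your approach, and you should be aware that it does in fact close, but only because of a specific cancellation that your write-up does not make explicit. Writing $j = k - |S|$, your crude bound $E_S \le \binom{2j}{j}$ together with the count $\binom{k}{j}$ of overlap sets of that size and the weight $m^{k-j}(m(m-1))^j$ gives, after dividing by $\E[N] = m^k/k!$,
\[
\frac{\tilde C_{k-j}}{\E[N]} \;\le\; \binom{k}{j}\,(m-1)^j\,\binom{2j}{j}\,\frac{k!}{(k+j)!} \;=\; \frac{(m-1)^j\binom{2j}{j}}{j!}\cdot\prod_{i=1}^j\frac{k-i+1}{k+i} \;\le\; \frac{\bigl(4(m-1)\bigr)^j}{j!},
\]
and summing over $j$ gives $\E[N^2] \le e^{4(m-1)}\,\E[N]$. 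So $p_k \ge e^{-4(m-1)} m^k/k!$ whenever $m^k/k! \le 1$, which is what you need (and in fact this constant $c_m = e^{-4(m-1)}$ is \emph{better} for large $k$ than the paper's $(1/1.03)^k$ loss). The subtlety your ``main obstacle'' paragraph worries about --- that $E_S$ depends on the pattern of $S$, not just $|S|$ --- is real, but the crude uniform bound $\binom{2j}{j}$ suffices precisely because the factor $\binom{k}{j}\,k!/(k+j)! \le 1/j!$ kills the apparent polynomial blow-up. You should include this one-line estimate; without it your claim that ``carefully summing yields $\E[N^2] = O_m(\E[N] + \E[N]^2)$'' is asserted, not shown, and it is the only place where your argument could plausibly fail. (Also note that in the relevant regime $\E[N] \ll 1$, so the bound is really $O_m(\E[N])$; the $\E[N]^2$ term is vestigial.) With that filled in, your lower bound is a clean and somewhat more elementary alternative to the paper's coding-theoretic construction.
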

	Note when $m=1$ the error term of Theorem~\ref{thm:C} is $\Theta(1)$, but for $m\ge 2$ it is $\Theta(\f{\log m}{\log \Gam^{-1}(n)}\Gam^{-1}(n))$, which is fairly close to the main term of $\Gam^{-1}(n)$.  Thus the behavior of $\E[L_{m,n}(\pi)]$ changes somewhat dramatically as soon as one starts to consider multiset permutations as opposed to just permutations.  

	\subsection{History and Related Work}
	
	Determining $L^i_{m,n}(\pi)$ and $L_{m,n}(\pi)$ can be viewed as variants of the well-studied problem of determining the length of the longest increasing subsequence in a random permutation of length $n$, and we denote this quantity by $\widetilde{L}_n$.  It was shown by Logan and Shepp \cite{LS77} and Vershick and Kerov \cite{VK77} that $\E[\widetilde{L}_n]\sim 2\sqrt{n}$, answering a famous problem of Ulam. Later Baik, Deift, and Johansson \cite{BDJ99} showed that the limiting distribution of $\widetilde{L}_n$ is the Tracy-Widom distribution. Some work with the analogous problem for multiset permutations has been considered recently by Almeanazel and Johnson \cite{AMJ20}.  Much more can be said about this topic, and we refer the reader to the excellent book by Romik \cite{Rom} for more information.

	The initial motivation for studying $L^1(\pi)$ was due to its relationship to a card guessing experiment introduced by Diaconis and Graham~\cite{DG81}.  To start the experiment, one shuffles a deck of $mn$ cards which consists of $n$ distinct card types each appearing with multiplicity $m$. In each round, a subject iteratively guesses what the top card of the deck is according to some strategy $G$.  After each guess, the subject is told whether their guess was correct or not, the top card is discarded, and then the experiment continues with the next card.  This experiment is known as the \textit{partial feedback model}.  For more on the history of the partial feedback model we refer the reader to \cite{DGS21}.
	
	If $G$ is a strategy for the subject in the partial feedback model and $\pi\in \S_{m,n}$, we let $P(G,\pi)$ denote the number of correct guesses made by the subject if they follow strategy $G$ and the deck is shuffled according to $\pi$. We say that $G$ is an \textit{optimal strategy} if $\E[P(G,\pi)]=\max_{G'} \E[P(G',\pi)]$, where $G'$ ranges over all strategies and $\pi\in \S_{m,n}$ is chosen uniformly at random.   Optimal strategies are unknown in general, and even if they were known they would likely be too complex for a human subject to implement in practice.  As such there is interest in coming up with (simple) strategies $G$ such that $\E[P(G,\pi)]$ is relatively large.
	
	One strategy is the \textit{trivial strategy} which guesses card type 1 every single round, guaranteeing a score of exactly $m$ at the end of the experiment.  A slightly better strategy is the \textit{safe strategy} $G_{safe}$ which guesses card type 1 every round until all $m$ are guessed correctly, then 2's until all $m$ are guessed correctly, and so on.  It can be deduced from arguments given by Diaconis, Graham, and Spiro~\cite{DGS21} that $\E[P(G_{safe},\pi)]$ is $m+1-\rec{m+1}$ plus an exponential error term, so the safe strategy does just a little better than the trivial strategy.  
	
	Another natural strategy is the \textit{shifting strategy} $G_{shift}$, defined by guessing 1 until you are correct, then 2 until you are correct, and so on; with the strategy being defined arbitrarily in the (very rare) event that one correctly guesses a copy of each card type.  It is not difficult to see that $P(G_{shift},\pi)\ge L^1_{m,n}(\pi)$, with equality holding provided the player does not correctly guess $n$.  Thus Theorem~\ref{thm:C1}(b) shows that the expected number of correct guesses under the shifting strategy is close to $m+1-\rec{m+2}$, which is slightly better than the trivial strategy, and very slightly better than the safe strategy.
	
	\subsection{Preliminaries}
	We let $[n]:=\{1,2,\ldots,n\}$ and let $[m]^n$ be the set of tuples of length $n$ with entries in $[m]$.  Whenever we write, for example, $\Pr[L_{m,n}(\pi)\ge k]$, we will assume $\pi$ is chosen uniformly at random from $\S_{m,n}$ unless stated otherwise.
	
	Throughout this paper we use several basic results from probability theory.  One such result is that if $X$ is a non-negative integer-valued random variable, then
	\[\E[X]=\sum_{k=1}^{\infty}\Pr[X\ge k].\]
	A crucial observation that we use throughout the text is the following.
	\begin{obs}\label{obs1}
	    For $k\le n$, if $\pi\in \S_{m,n}$ and $\tau\in \S_{m,k}$ are drawn uniformly at random, then
	    \[\Pr[L^1_{m,n}(\pi)\ge k]=\Pr[L^1_{m,k}(\tau)=k].\]
	\end{obs}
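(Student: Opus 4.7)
The plan is to prove the identity by passing to the restriction of $\pi$ to its ``small'' letters. Write $\pi|_{[k]}$ for the word obtained from $\pi \in \S_{m,n}$ by deleting every occurrence of a letter in $\{k+1,\ldots,n\}$. Then $\pi|_{[k]}$ is an element of $\S_{m,k}$, and a $1$-continuously increasing subsequence of $\pi$ of length at least $k$ must use only the symbols $1,2,\ldots,k$, so it is in fact a subsequence of $\pi|_{[k]}$. Conversely, any such subsequence of $\pi|_{[k]}$ is trivially a subsequence of $\pi$. Hence $L^1_{m,n}(\pi)\ge k$ if and only if $L^1_{m,k}(\pi|_{[k]})\ge k$.

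Next I would argue that if $\pi$ is uniform on $\S_{m,n}$, then $\pi|_{[k]}$ is uniform on $\S_{m,k}$. The cleanest way to see this is by a counting argument: for any fixed $\tau\in \S_{m,k}$, the number of $\pi\in \S_{m,n}$ with $\pi|_{[k]}=\tau$ equals $\binom{mn}{mk}\cdot\frac{(m(n-k))!}{(m!)^{n-k}}$, since one chooses which $mk$ positions of $\pi$ hold the symbols in $[k]$ (the arrangement in these positions is forced to be $\tau$) and then arranges the $m(n-k)$ copies of the symbols in $\{k+1,\ldots,n\}$ in the remaining positions in $\frac{(m(n-k))!}{(m!)^{n-k}}$ ways. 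This count is independent of $\tau$, so $\pi|_{[k]}$ is uniformly distributed over $\S_{m,k}$.

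Finally, since any word $\tau\in \S_{m,k}$ uses only symbols from $[k]$, the longest $1$-continuously increasing subsequence has length at most $k$; thus $L^1_{m,k}(\tau)\ge k$ is equivalent to $L^1_{m,k}(\tau)=k$. Combining the three steps gives
\[\Pr[L^1_{m,n}(\pi)\ge k]=\Pr[L^1_{m,k}(\pi|_{[k]})\ge k]=\Pr[L^1_{m,k}(\tau)=k],\]
as desired. There is no real obstacle here; the only thing to be careful about is the uniformity of the restriction, which is a straightforward symmetry argument as above.
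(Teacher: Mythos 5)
Your argument is exactly the one the paper gives: restrict $\pi$ to the letters in $[k]$, observe this restriction is uniform on $\S_{m,k}$, and note that $L^1_{m,n}(\pi)\ge k$ is equivalent to the restriction having a complete increasing subsequence. You merely supply the explicit counting argument for uniformity that the paper leaves as ``not difficult to see,'' so the two proofs are essentially identical.
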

	\begin{proof}
	    For $\pi\in \S_{m,n}$, let $\phi(\pi)\in \S_{m,k}$ be the word obtained by deleting every letter from $\pi$ which is larger than $k$.  Note that $L^1_{m,n}(\pi)\ge k$ if and only if $L^1_{m,k}(\phi(\pi))=k$.  Moreover, it is not difficult to see that $\phi(\pi)$ is distributed uniformly at random in $\S_{m,k}$ provided $\pi$ is distributed uniformly at random in $\S_{m,n}$, proving the result.
	\end{proof}
	
	\section{Proof of Theorem~\ref{thm:C1}}
	\subsection{Theorem \ref{thm:C1}(a): Generating Functions }

We say that a word $\pi\in \perm$ has a \textit{complete increasing subsequence} if $L^1_{m,n}(\pi) = n$. Let $\hm$ be the number of words $\pi\in\perm$ which have a complete increasing subsequence. Horton and Kurn \cite[Corollary (c)]{HK81} give the following formula for $\hm$. 
\begin{thm}[\cite{HK81}]\label{HK} The number of words $\pi\in\perm$ which have a complete increasing subsequence, $\hm$, is given by
\[
\hm = \sum_{(i_1,\ldots,i_m)\in\wc} \binom{n}{i_1,\ldots,i_m} \dfrac{(mn)!}{l!}\dfrac{(-1)^{l-n}}{\prod_{j=1}^m(m-j)!^{i_j}},
\]
where \[l=\sum_{j=1}^{m}j i_j,\] 
$\wc$ is the set of weak compositions of $n$ into $m$ parts, i.e.,
\[
\wc \coloneqq \left\lbrace (i_1,\ldots,i_m)\in \mathbb{Z}_{\geq{0}}^m  \middle|  \sum_{j=1}^m i_j = n\right\rbrace,\]
and \[\binom{n}{i_1,\ldots,i_m} = \dfrac{n!}{\prod_{j=1}^m i_j!}\] is a multinomial coefficient. 
\label{thm.hmformula}
\end{thm}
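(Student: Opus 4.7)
The plan is to establish the formula by a sign-reversing inclusion-exclusion on enriched subsequences. For a word $\pi \in \mathfrak{S}_{m,n}$ and a function $j \colon [n] \to [m]$, I call a \emph{$j$-configuration} of $\pi$ a choice, for each value $v \in [n]$, of a set $S_v$ of $j_v$ positions at which $v$ occurs in $\pi$, subject to $\max S_v < \min S_{v+1}$ for all $v$; equivalently, the marked positions spell the word $1^{j_1} 2^{j_2} \cdots n^{j_n}$ from left to right. Writing $l := \sum_v j_v$ and $i_j := |\{v : j_v = j\}|$, the strategy is first to count pairs $(\pi, S_\bullet)$ grouped by profile $(i_1, \ldots, i_m)$, and then to evaluate the signed sum $\sum_\pi \sum_{S_\bullet} (-1)^{l - n}$ as $h_m(n)$.

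For the enumeration step, I would use that for a fixed profile $(i_1, \ldots, i_m)$ there are $\binom{n}{i_1,\ldots,i_m}$ functions $j$ with that profile, and for each such $j$ the pair $(\pi, S_\bullet)$ is determined by choosing the $l$ marked positions out of $mn$ (forcing their values to spell $1^{j_1}\cdots n^{j_n}$ in order) and then distributing the $m - j_v$ unmarked copies of each $v$ among the $mn - l$ remaining positions, yielding
\[
\binom{mn}{l} \cdot \frac{(mn - l)!}{\prod_v (m - j_v)!} \;=\; \frac{(mn)!}{l! \prod_j ((m-j)!)^{i_j}}
\]
such pairs. Multiplying by $\binom{n}{i_1,\ldots,i_m}$ and $(-1)^{l-n}$ matches the summand in the theorem, so the right-hand side equals $\sum_{\pi \in \mathfrak{S}_{m,n}} N(\pi)$, where $N(\pi) := \sum_{S_\bullet} (-1)^{l - n}$ ranges over all configurations of $\pi$.

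The heart of the argument is to show $N(\pi) = 1$ if $\pi$ has a complete increasing subsequence and $N(\pi) = 0$ otherwise. I would parameterize each configuration by its right endpoints $T_v := \max S_v$ and excess $S_v' := S_v \setminus \{T_v\}$. Setting $T_0 := 0$, this data is equivalent to the configuration provided $T_1 < \cdots < T_n$ is a complete increasing subsequence of $\pi$ and $S_v' \subseteq Q_v := \{p : \pi_p = v,\ T_{v-1} < p < T_v\}$ is arbitrary. Since $|S_v'| = j_v - 1$ and summing over $v$ gives $l - n$, we have $(-1)^{l - n} = \prod_v (-1)^{|S_v'|}$, so
\[
N(\pi) \;=\; \sum_{T_\bullet} \prod_v \sum_{S_v' \subseteq Q_v} (-1)^{|S_v'|} \;=\; \sum_{T_\bullet} \prod_v [Q_v = \emptyset],
\]
and the product is nonzero exactly when $T_v$ is the earliest occurrence of $v$ strictly after $T_{v-1}$. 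This greedy condition determines at most one sequence $T_\bullet$, and it succeeds precisely when $\pi$ admits a complete increasing subsequence, so $N(\pi) \in \{0,1\}$ as claimed.

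The main obstacle I anticipate is the signed cancellation in the third paragraph: the telescoping requires that one not only identify the right reparameterization $(T_\bullet, S_\bullet')$ but also verify that the sign of each configuration depends solely on the excess sizes $|S_v'|$, after which the binomial identity $\sum_{S \subseteq Q}(-1)^{|S|} = [Q = \emptyset]$ does all the work. The bookkeeping in the enumeration step is then routine, with no double counting because the values of the marked positions are forced by the profile; summing $N(\pi)$ over $\pi$ finally yields $h_m(n)$.
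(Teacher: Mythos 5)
Your proof is correct. The paper itself only cites this formula from Horton and Kurn \cite{HK81} and gives no proof, so there is nothing to compare against. What you have is a self-contained sign-reversing inclusion-exclusion: the right-hand side is a signed count of pairs $(\pi, S_\bullet)$ where $S_\bullet$ marks positions spelling $1^{j_1}\cdots n^{j_n}$; the pair count for a fixed $j$ is $\binom{mn}{l}\frac{(mn-l)!}{\prod_v (m-j_v)!}=\frac{(mn)!}{l!\prod_j ((m-j)!)^{i_j}}$, which depends only on the profile $(i_1,\ldots,i_m)$, so the total matches the formula; and the per-word sum $N(\pi)$ collapses via the reparametrization by right endpoints $T_v=\max S_v$ and excesses $S_v'=S_v\setminus\{T_v\}\subseteq Q_v$, using $\sum_{S\subseteq Q}(-1)^{|S|}=[Q=\emptyset]$. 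The surviving term, when it exists, is exactly the greedy complete increasing subsequence, giving $N(\pi)\in\{0,1\}$ as required. I verified the bijection (the chain $\max S_v=T_v<\min S_{v+1}$ gives $T_\bullet$ strictly increasing and $S_v'\subseteq Q_v$, and the inverse map is clear), the fact that $(-1)^{l-n}=\prod_v(-1)^{|S_v'|}$, and that $j\colon[n]\to[m]$ ensures each $S_v$ is nonempty so $T_v$ is well-defined. No gaps.
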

Notice that $\lm$ can be expressed in terms of $h_{m}(n)$ as follows: 
\begin{equation}
\mathcal{L}_{m}^{1}=\lim_{k\to \infty} \E[L_{m,k}^1(\pi)]= \lim_{k\to \infty} \sum_{n=1}^{k}\mathrm{Pr}[L^{1}_{m,k}(\pi)\geq n] = \lim_{k\to \infty} \sum_{n=1}^{k} \dfrac{\hm}{|\perm|}=\sum_{n=1}^{\infty} \dfrac{\hm}{|\perm|},
\label{eq.lmseries}
\end{equation}
where the third equality is due to Observation \ref{obs1}. Note that $|\perm| = (mn)!/(m!)^n$. Thus, as a consequence of Theorem~\ref{HK}, we have
\begin{subequations}
\begin{equation}
\dfrac{\hm}{|\perm|} = (-m!)^n \sum_{(i_1,\ldots,i_m)\in\wc} \binom{n}{i_1,\ldots,i_m} \dfrac{1}{\prod_{j=1}^m(m-j)!^{i_j}}\dfrac{(-1)^l}{l!},
\label{eq.summandexpand1}
\end{equation}
\begin{equation}
\hspace{2.5em}= (-m!)^n \sum_{(i_1,\ldots,i_m)\in\wc} \binom{n}{i_1,\ldots,i_m}\prod_{j=1}^m \left(\dfrac{(-1)^j}{(m-j)!}\right)^{i_j}\dfrac{1}{l!}.
\label{eq.summandexpand2}
\end{equation}
\end{subequations}

Intuitively, if the $1/l!$ were removed from the right-hand-side expression in \eqref{eq.summandexpand2}, then by using the multinomial theorem we could write this expression as an $n^{\text{th}}$ power, turning \eqref{eq.lmseries} into a geometric series. The next few paragraphs formalise this idea.

We begin by replacing $(-1)^l$ by $x^l$ in the right-hand-side of \eqref{eq.summandexpand1} to obtain the polynomial
\begin{align*}
    p_{m,n}(x)\coloneqq& (-m!)^n \sum_{(i_1,\ldots,i_m)\in\wc} \binom{n}{i_1,\ldots,i_m} \dfrac{1}{\prod_{j=1}^m(m-j)!^{i_j}}\dfrac{x^l}{l!}\\ 
    =& (-m!)^n \sum_{(i_1,\ldots,i_m)\in\wc} \binom{n}{i_1,\ldots,i_m}\prod_{j=1}^m \left(\dfrac{x^j}{(m-j)!}\right)^{i_j}\dfrac{1}{l!}.
\end{align*}

Thus,
\[
p_{m,n}(-1)= \dfrac{\hm}{|\perm|}.
\]

Next, we define an operator in order to remove the $l!$ from the denominator. Let $R$ be a commutative ring containing $\mathbb{Q}$ and let $\Phi:R[x]\to R[x]$ be an $R$-linear map defined on the monomials by 
\[
\Phi(x^n) = \dfrac{x^n}{n!}.
\]
We can extend $\Phi$ to an $R$-linear map on $R[[x]]\to R[[x]]$, which we also refer to as $\Phi$ by abuse of notation. Throughout this article, $R$ is either $\mathbb{C}$ or $\mathbb{C}[[y]]$ for an indeterminate $y$, and we shall refer to this $R$-linear map as $\Phi$ in both cases. Notice that $\Phi$ is invertible for any such ring $R$.  A key property that we use about $\Phi$ is
\begin{equation}\Phi\l(\rec{1-ax}\r)=\Phi\l(\sum_{i=0}^\infty (ax)^i\r)=\sum_{i=0}^\infty \f{(ax)^i}{i!}=e^{ax}.\label{eq:simplefraction}\end{equation}

Consider the polynomial
\begin{subequations}
\begin{equation*}
q_{m,n}(x)\coloneqq \Phi^{-1}\left(p_{m,n}(x)\right) = (-m!)^n \sum_{(i_1,\ldots,i_m)\in\wc} \binom{n}{i_1,\ldots,i_m} \dfrac{x^l}{\prod_{j=1}^m(m-j)!^{i_j}}
\end{equation*}
\begin{equation*}
= (-m!)^n \sum_{(i_1,\ldots,i_m)\in\wc} \binom{n}{i_1,\ldots,i_m}\prod_{j=1}^m \left(\dfrac{x^j}{(m-j)!}\right)^{i_j}.
\end{equation*}
\end{subequations}
Notice that,
\[
q_{m,n}(x) =  \left(-m!\sum_{j=1}^m \dfrac{x^j}{(m-j)!}\right)^n=(q_{m,1}(x))^n.
\]

Let $P_{m}(x,y)$ and $Q_{m}(x,y)$ be the ordinary generating functions of $p_{m,n}(x)$ and $q_{m,n}(x)$ respectively, i.e.
\begin{subequations}
\begin{equation*}
P_{m}(x,y) \coloneqq \sum_{n=0}^{\infty} p_{m,n}(x) y^n,
\end{equation*}
\begin{equation*}
Q_{m}(x,y) \coloneqq \sum_{n=0}^{\infty} q_{m,n}(x) y^n = \Phi^{-1}\left(P_{m}(x,y) \right).
\end{equation*}
\end{subequations}

Putting everything together, we see that
\begin{equation}\label{Cm1}
\lm=P_{m}(-1,1)-1.
\end{equation}
and thus it suffices to find a nice closed form expression for $P_{m}(x,y)$.  Note that
\[
q_{m,1}(x)= -m!x^m E_{m-1}(1/x),
\]
where we recall the polynomial $E_{m-1}(x)$ is defined in Theorem~\ref{thm:C1} by $E_{m-1}(x)=\sum_{k=0}^{m-1} x^k/k!$.   As $q_{m,n}(x) = \left(q_{m,1}(x)\right)^n $, we have
\begin{equation}\label{eq:Q}
Q_{m}(x,y) = \dfrac{1}{1-yq_{m,1}(x)} =\dfrac{1}{1+m!x^my E_{m-1}(1/x)}.
\end{equation}

Hence,
\[
P_{m}(x,y) =  \Phi\left(Q_{m}(x,y)\right) = \Phi\left(\dfrac{1}{1+m!x^my E_{m-1}(1/x)}\right),
\]
and thus
\[
P_{m}(x,1) = \Phi\left(\dfrac{1}{1+m!x^m E_{m-1}(1/x)}\right)=\Phi\left(\dfrac{1}{m!x^m E_{m}(1/x)}\right).
\]

We now prove the main result of this subsection.

\begin{prop}\label{zeroes} Let $\roots$ be the zeroes of the polynomial $E_m(x)$. The formal power series $P_{m}(x,1)$ satisfies
\begin{equation*}
P_{m}(x,1) = -\sum_{i=1}^m \alpha_i^{-1}e^{\alpha_{i}x }.
\end{equation*}
\end{prop}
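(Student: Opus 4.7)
The plan is to apply partial fraction decomposition to the expression
\[
P_m(x,1) = \Phi\!\left(\frac{1}{m! x^m E_m(1/x)}\right)
\]
derived earlier in the excerpt, and then invoke \eqref{eq:simplefraction} term by term. Since $E_m$ is a degree-$m$ polynomial with leading coefficient $1/m!$, it factors as $E_m(x) = \frac{1}{m!}\prod_{i=1}^m (x - \alpha_i)$, and substituting $1/x$ and multiplying by $m! x^m$ yields
\[
m! x^m E_m(1/x) = \prod_{i=1}^m (1 - \alpha_i x).
\]

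Before doing partial fractions I would verify that the $\alpha_i$ are distinct and nonzero. Nonvanishing at $0$ is immediate from $E_m(0) = 1$. For simplicity of the roots, note that $E_m'(x) = E_{m-1}(x) = E_m(x) - x^m/m!$, so a repeated root $\alpha$ would satisfy $\alpha^m = 0$, contradicting $\alpha \neq 0$. I can then decompose
\[
\frac{1}{\prod_{i}(1-\alpha_i x)} = \sum_{i=1}^m \frac{c_i}{1-\alpha_i x}, \qquad c_i = \prod_{j \neq i} \frac{\alpha_i}{\alpha_i - \alpha_j} = \frac{\alpha_i^{m-1}}{\prod_{j \neq i}(\alpha_i - \alpha_j)}.
\]
Applying $\Phi$ to each summand via \eqref{eq:simplefraction} then gives $P_m(x,1) = \sum_i c_i \, e^{\alpha_i x}$, so it remains to show that $c_i = -\alpha_i^{-1}$.

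This residue computation is the single content-rich step. From the factorization of $E_m$, product-rule differentiation yields $E_m'(\alpha_i) = \frac{1}{m!} \prod_{j \neq i}(\alpha_i - \alpha_j)$. On the other hand, the truncated-exponential identity $E_m'(x) = E_{m-1}(x)$ combined with $E_{m-1}(\alpha_i) = E_m(\alpha_i) - \alpha_i^m/m! = -\alpha_i^m/m!$ gives $\prod_{j \neq i}(\alpha_i - \alpha_j) = -\alpha_i^m$. Substituting this into the formula for $c_i$ produces $c_i = -\alpha_i^{-1}$, which is the desired identity. The main obstacle is essentially this residue calculation, which relies entirely on the distinctive property $E_m' = E_{m-1}$ of the truncated exponential; everything else is bookkeeping about $\Phi$ applied to a geometric-series summand.
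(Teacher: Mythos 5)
Your proof is correct and takes essentially the same approach as the paper's: a partial-fraction decomposition of $1/(m!\,x^m E_m(1/x))$ followed by applying $\Phi$ term by term, with the residue computation hinging on the identity $E_m'=E_{m-1}$ and the simplicity of the roots. The only difference is cosmetic---you compute the coefficient $c_i$ via the Lagrange form and $E_m'(\alpha_i)=-\alpha_i^m/m!$, whereas the paper parameterizes by $g(x)=m!\,x^m E_m(1/x)$ and evaluates $g'(\alpha_i^{-1})=\alpha_i^2$---but both calculations use the same key fact and yield the same residues.
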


\begin{proof} Let $g(x) \coloneqq m!x^m E_m(1/x)$. Since $\alpha_1^{-1},\ldots, \alpha_m^{-1}$ are the zeroes of $g(x)$, we have
\begin{equation*}
g(x) =m! (x-\alpha_1^{-1})\cdots (x-\alpha_m^{-1}).
\end{equation*}

Notice that $E_m(x)$ has no repeated zeroes. This is true because, if $\alpha$ is a repeated zero of $E_m(x)$, it is also a zero of its derivative $E_{m}'(x) = E_{m-1}(x)$. But then $\alpha$ has to be a zero of $E_m(x)-E_{m-1}(x) = x^m/m!$, which is only possible if $\alpha=0$, a contradiction as $0$ is not a zero of $E_m(x)$.

Thus  $\roots$ are pairwise distinct, and hence the zeroes of $g(x)$ being $\alpha_1^{-1},\ldots, \alpha_m^{-1}$, are also pairwise distinct. This and \eqref{eq:Q} means $Q_{m}(x,1)$ has the partial fraction decomposition
\begin{equation*}
Q_{m}(x,1) = \dfrac{1}{g(x)} = \sum_{i=1}^m \dfrac{1}{g'\left(\alpha_i^{-1}\right)}\cdot \dfrac{1}{x-\alpha_i^{-1}}.
\end{equation*}

The derivative of $g$ is
\[
g'(x) = m!\left( \dfrac{m x^m E_m(1/x)}{x} - \dfrac{x^mE_m'(1/x)}{x^2} \right)  = m!\left( \dfrac{m x^m E_m(1/x)}{x} - \dfrac{x^m(E_m(1/x) - x^{-m}/m!)}{x^2} \right)
\]

Hence for any $i$,
\begin{equation*}
g'\left(\alpha_i^{-1}\right) = \alpha_i^{2}
\end{equation*}
which gives
\begin{equation*}
P_{m}(x,1) = \Phi\left(Q_{m}(x,1)\right) = -\sum_{i=1}^m  \Phi\left(\dfrac{1}{\alpha_i(1-\alpha_i x)}\right) = -\sum_{i=1}^m \alpha_i^{-1}e^{\alpha_{i}x },
\end{equation*}
where this last step used \eqref{eq:simplefraction}.
\end{proof}
This proposition together with \eqref{Cm1} gives Theorem \ref{thm:C1}(a).

\subsection{Theorem \ref{thm:C1}(b): Exponential Sums of Zeroes}
We remind the reader that Theorem~\ref{thm:C1}(b) claims
\[\l|\mathcal{L}_m^1-\l(m+1-\rec{m+2}\r)\r|\le O(e^{-\beta m})\]
for some constant $\be>0$.  Given Theorem~\ref{thm:C1}(a), proving this claim is equivalent to showing that $\displaystyle\sum_i\alpha_i^{-1}e^{-\alpha_i}=-m-2+\frac{1}{m+2}+O(e^{-\beta m})$ for some positive constant $\beta$.  We do this by following the approach used by Conrey and Ghosh~\cite{conrey1988zeros} to evaluate a similar exponential sum.

For $1\le i\le m$, let $s_i$ and $t_i$ be the real and imaginary parts, respectively, of $\alpha_i$. Let $\gamma^-,\gamma,\gamma^+$ be arbitrary positive numbers such that $0<\gamma^-<\gamma<\gamma^+<1-\log{2}$. We partition $[m]$ into disjoint sets $S$ and $L$ where $i\in S$ when $s_i\leq{\gamma m}$ and $i\in L$ when $s_i>\gamma m$, allowing us to rewrite our desired sum as
\[\displaystyle\sum_{i=1}^m\alpha_i^{-1}e^{-\alpha_i}=\displaystyle\sum_{i\in S}\alpha_i^{-1}e^{-\alpha_i}+\displaystyle\sum_{i\in L}\alpha_i^{-1}e^{-\alpha_i}.\]

Define
\[R_m(x)=e^x-E_m(x)=\displaystyle\sum_{k=m+1}^\infty \frac{x^k}{k!}.\]
The following results are proven by Conrey and Ghosh \cite{conrey1988zeros} and Zemyan~\cite{Zem05}:
\begin{prop}\label{D1} 
~
\begin{itemize}

\item[\tu{a}] \textup{\cite[Equations (6) and (7)]{conrey1988zeros}} For sufficiently large $m$, we have
$|\alpha_i|\geq{me^{\gamma^--1}}$ for $i\in L$ and $|\alpha_i|\leq{me^{\gamma^+-1}}$ for $i\in S$. Consequently, we have $|\alpha_i|<m/2$ for $i\in S$.

\item[\tu{b}] \textup{\cite[Lemma 1]{conrey1988zeros}} For $|x|<\frac{1}{2}(m+2)$, we have
\[
\frac{1}{R_m(x)}=\frac{(m+1)!}{x^{m+1}}\left(1+\displaystyle\sum_{k=1}^\infty c_kx^k\right)
\]

with $|c_k|\leq{\frac{1}{2}(\frac{2}{m+2})^k}$.

\item[\tu{c}] \textup{\cite[Theorem 7]{Zem05}} \[\displaystyle\sum_{i=1}^m \alpha_i^{-t}=\left\{
                \begin{array}{ll}
               -1 & t=1, \\
               0 & 2\leq{t}\leq{m},\\
                  1/m! & t=m+1,\\
                  -1/m!&t=m+2
                  .
                \end{array}\right.
              \]
\end{itemize}
\end{prop}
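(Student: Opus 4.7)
The proposition bundles three classical facts that the paper quotes from the literature, but I would prove each part separately since they draw on rather different techniques.

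For part (a), the underlying phenomenon is Szeg\H{o}'s theorem on the asymptotic distribution of zeros of $E_m(x)$: after rescaling, the zeros $\alpha_i/m$ accumulate on the Szeg\H{o} curve $\{z : |ze^{1-z}| = 1,\ |z| \le 1\}$, on which $|z| = e^{\Re z - 1}$. Consequently a zero with real part $\approx \gamma m$ has modulus $\approx m e^{\gamma - 1}$. To promote this qualitative fact to the uniform one-sided bounds in the statement, I would apply a saddle-point analysis to the incomplete-gamma representation $E_m(x) = \frac{e^x}{m!}\int_x^{\infty} t^m e^{-t}\,dt$ after the substitutions $t = ms$, $x = mz$. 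The saddle at $s = z$ generates the Szeg\H{o} curve, and standard uniform error estimates for large $m$ give the claimed bounds with the safety margins $\gamma-\gamma^-$ and $\gamma^+-\gamma$ absorbing the discrepancy between the true zeros and the curve. The consequence $|\alpha_i| < m/2$ for $i \in S$ is then immediate from $e^{\gamma^+ - 1} < 1/2$, which is exactly what the hypothesis $\gamma^+ < 1 - \log 2$ asserts.

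For part (b), the approach is pure algebra. Factor out the leading behaviour
\[R_m(x) = \frac{x^{m+1}}{(m+1)!}\bigl(1 + h(x)\bigr), \qquad h(x) = \sum_{k=1}^{\infty}\frac{x^k}{(m+2)(m+3)\cdots(m+1+k)}.\]
When $|x| < (m+2)/2$, the $k$-th coefficient of $h$ is bounded by $(m+2)^{-k}$, and hence $|h(x)| < 1$. Therefore $(1+h(x))^{-1} = \sum_{j\ge 0}(-h(x))^j$ converges absolutely, and extracting the coefficient of $x^k$ reduces to counting compositions of $k$ into positive parts: there are $2^{k-1}$ of them, and each contributes at most $(m+2)^{-k}$, giving $|c_k| \le \tfrac{1}{2}(2/(m+2))^k$.

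For part (c), observe that the reciprocals $\beta_i := \alpha_i^{-1}$ are precisely the roots of the monic polynomial $x^m E_m(1/x) = \sum_{j=0}^m x^j/(m-j)!$, so reading off its coefficients yields the elementary symmetric polynomials $e_k(\beta_1,\ldots,\beta_m) = (-1)^k/k!$. Newton's identity $p_t = \sum_{j=1}^{t-1} (-1)^{j-1} e_j p_{t-j} + (-1)^{t-1} t e_t$ for $t \le m$ gives $p_1 = -1$, and an induction shows that for $2 \le t \le m$ only the two terms $(-1)^{t-2} e_{t-1} p_1$ and $(-1)^{t-1} t e_t$ are nonzero, and they cancel, giving $p_t = 0$. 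For $t > m$ the identity becomes $p_t = \sum_{j=1}^m (-1)^{j-1} e_j p_{t-j}$; taking $t = m+1$ and noting that only $p_1$ is nonzero among $p_1, \ldots, p_m$ yields $p_{m+1} = 1/m!$, and then $t = m+2$ yields $p_{m+2} = e_1 p_{m+1} = -1/m!$.

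The only genuine obstacle is part (a): the Szeg\H{o} curve picture is classical, but converting it into the stated uniform one-sided bounds requires careful complex analysis to track error terms in the parameters $\gamma^\pm$. Parts (b) and (c) reduce to routine bookkeeping in calculus and symmetric-function theory respectively.
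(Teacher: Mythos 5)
The paper does not prove Proposition~\ref{D1} at all; it simply cites Conrey--Ghosh for parts (a) and (b) and Zemyan for part (c). You instead reconstruct the proofs, so there is no ``paper proof'' to compare against step by step, but your arguments are the standard ones and they are correct. For part (b), factoring $R_m(x) = \frac{x^{m+1}}{(m+1)!}(1+h(x))$ with $h_k = \frac{1}{(m+2)\cdots(m+1+k)} \le (m+2)^{-k}$, expanding $(1+h)^{-1}$ as a geometric series, and then bounding $|c_k|$ by the number of compositions of $k$ (which is $2^{k-1}$) times the uniform bound $(m+2)^{-k}$ gives exactly $\frac12(2/(m+2))^k$; this is precisely the Conrey--Ghosh argument. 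For part (c), observing that $x^m E_m(1/x) = \sum_{j=0}^m x^j/(m-j)!$ is the monic polynomial whose roots are $\beta_i = \alpha_i^{-1}$, so that $e_k(\beta) = (-1)^k/k!$, and then running Newton's identities inductively (the cancellation $(-1)^{t-2}e_{t-1}p_1 + (-1)^{t-1}te_t = 0$ for $2\le t\le m$, then $p_{m+1} = (-1)^{m-1}e_m p_1 = 1/m!$ and $p_{m+2} = e_1 p_{m+1} = -1/m!$) is correct and complete. For part (a), your appeal to the Szeg\H{o} curve and a saddle-point analysis of the incomplete-gamma representation is the right classical framework, and you are honest that converting the limiting picture into the uniform one-sided bounds is where the real work lies; a fully detailed argument would need the quantitative zero-location estimates of Conrey--Ghosh, but as a blind sketch it correctly identifies both the mechanism and the remaining gap. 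The derivation of $|\alpha_i| < m/2$ on $S$ from $\gamma^+ < 1-\log 2$ is immediate as you say.
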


We begin our argument by restricting our attention to the indices in $L$:
\begin{lem}\label{D2}
\[\l|\displaystyle\sum_{i\in L}\alpha_i^{-1}e^{-\alpha_i}\r|\leq{\gamma^{-1}e^{-\gamma m}}.\]
\end{lem}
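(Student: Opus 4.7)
The plan is to prove this by applying the triangle inequality term-by-term, where each summand is controlled purely by the real part $s_i$. First I would observe that for any $i \in L$ we have $s_i > \gamma m > 0$, and since $|\alpha_i|^2 = s_i^2 + t_i^2 \ge s_i^2$ we get the bound $|\alpha_i|^{-1} \le s_i^{-1}$. Combined with the identity $|e^{-\alpha_i}| = e^{-s_i}$, this yields
\[
\bigl|\alpha_i^{-1} e^{-\alpha_i}\bigr| \le s_i^{-1} e^{-s_i}.
\]

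Next I would use the fact that the real-valued function $f(x) = x^{-1} e^{-x}$ is strictly decreasing on $(0, \infty)$ (since $f'(x) = -x^{-2}(1+x) e^{-x} < 0$), so that $s_i > \gamma m$ implies $s_i^{-1} e^{-s_i} < (\gamma m)^{-1} e^{-\gamma m}$.

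Finally, summing over $i \in L$ and using the trivial bound $|L| \le m$ gives
\[
\biggl|\sum_{i \in L} \alpha_i^{-1} e^{-\alpha_i}\biggr| \le \sum_{i \in L} s_i^{-1} e^{-s_i} < m \cdot (\gamma m)^{-1} e^{-\gamma m} = \gamma^{-1} e^{-\gamma m},
\]
which is exactly the claimed inequality.

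There is essentially no obstacle here; this lemma is a direct estimate based on the definition of $L$, and neither Proposition~\ref{D1}(a) nor the finer parts (b) and (c) are needed. Those are presumably reserved for controlling the more delicate sum $\sum_{i \in S} \alpha_i^{-1} e^{-\alpha_i}$, where cancellation and the Conrey--Ghosh inversion of $R_m$ must be exploited. The only thing to double-check is that the monotonicity argument is applied in the correct direction, which it is since both $s_i^{-1}$ and $e^{-s_i}$ decrease as $s_i$ increases through the positive reals.
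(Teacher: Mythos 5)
Your argument is correct and is essentially the same as the paper's: triangle inequality, then bound each summand by $(\gamma m)^{-1}e^{-\gamma m}$ using $s_i>\gamma m$ and $|\alpha_i|\ge s_i$, then multiply by $|L|\le m$. The only cosmetic difference is that you route the estimate through the monotonicity of $x^{-1}e^{-x}$, whereas the paper bounds the factors $|\alpha_i|^{-1}$ and $e^{-s_i}$ separately, but both yield the identical chain of inequalities; you are also right that Proposition~\ref{D1} plays no role here.
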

\begin{proof}
By the triangle inequality, 
\[\l|\displaystyle\sum_{i\in L}\alpha_i^{-1}e^{-\alpha_i}\r|\leq{\displaystyle\sum_{i\in L}|\alpha_i^{-1}e^{-\alpha_i}|}.\]
Note that $|\alpha_i^{-1}e^{-\alpha_i}|=|\alpha_i^{-1}||e^{-s_i}|<|\alpha_i^{-1}|e^{-\gamma m}$. Since $s_i>\gamma m$, we know that $|\alpha_i|>\gamma m$, so $|\alpha_i^{-1}|<(\gamma m)^{-1}$. Thus for $i\in L$, we have that 
\[
|\alpha_i^{-1}e^{-\alpha_i}|<(\gamma m)^{-1}e^{-\gamma m}.
\]
Adding over the elements of $L$, of which there are at most $m$, we have $|\displaystyle\sum_{i\in L}\alpha_i^{-1}e^{-\alpha_i}|\leq{\gamma^{-1}e^{-\gamma m}}$.
\end{proof}

To evaluate the sum for the indices in $S$, we utilize $R_m(x)$.  Since the $\alpha_i$'s are the roots of $E_m(x)$, we have $e^{\alpha_i}=R_m(\alpha_i)$ for $i=1,\cdots,m$, so $e^{-\alpha_i}=\frac{1}{R_m(\alpha_i)}$.

\begin{lem}\label{D3}
For $|x|<\frac{1}{2}(m+2)$, we have
\[
\frac{1}{R_m(x)}=\frac{(m+1)!}{x^{m+1}}\left(1-\frac{x}{m+2}+\displaystyle\sum_{k=2}^\infty c_kx^k\right)
\]

with $|c_k|\leq{\frac{1}{2}(\frac{2}{m+2})^k}$.
\end{lem}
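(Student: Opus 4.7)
The plan is to observe that Lemma~\ref{D3} is nothing more than Proposition~\ref{D1}(b) with the linear coefficient computed explicitly. Since the bound $|c_k|\le \tfrac{1}{2}(2/(m+2))^k$ holds for \emph{all} $k\ge 1$ by Proposition~\ref{D1}(b), it automatically holds for $k\ge 2$, so the only real content of the lemma is the identity $c_1 = -1/(m+2)$.

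To isolate $c_1$, I would factor the leading term out of the defining series for $R_m$:
\[
R_m(x) \;=\; \sum_{k=m+1}^{\infty}\frac{x^k}{k!} \;=\; \frac{x^{m+1}}{(m+1)!}\left(1 + \frac{x}{m+2} + \frac{x^2}{(m+2)(m+3)} + \cdots\right).
\]
Inverting the bracketed power series by the usual formula $1/(1+u) = 1 - u + u^2 - \cdots$ (valid as a formal power series, and in particular valid for matching the coefficient of $x$), one obtains
\[
\frac{1}{R_m(x)} \;=\; \frac{(m+1)!}{x^{m+1}}\left(1 - \frac{x}{m+2} + O(x^2)\right).
\]
Comparing with the expansion in Proposition~\ref{D1}(b) then forces $c_1=-1/(m+2)$, which is exactly the explicit coefficient Lemma~\ref{D3} claims.

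Combining this computation with the bounds from Proposition~\ref{D1}(b) for $k\ge 2$ yields the statement. There is no serious obstacle here; the only thing to be slightly careful about is that the reciprocal expansion is being performed as a formal power series (or equivalently, in the convergent regime $|x|<(m+2)/2$ guaranteed by Proposition~\ref{D1}(b)), so that the identification of $c_1$ is unambiguous.
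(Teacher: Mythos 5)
Your proof is correct and takes essentially the same approach as the paper: both factor the leading term $x^{m+1}/(m+1)!$ out of $R_m(x)$ and expand the reciprocal of the resulting bracketed series (the paper via an explicit geometric series with the convergence radius justified by Conrey--Ghosh, you via the formal inverse $1/(1+u)=1-u+\cdots$) to read off $c_1=-1/(m+2)$, with the bounds for $k\ge 2$ carried over directly from Proposition~\ref{D1}(b).
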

\begin{proof}

All of this follows from Proposition \ref{D1}(b) except for showing that $c_1=-1/(m+2)$.  To show this, we observe by definition of $R_m(x)$ that

\[
\frac{x^{m+1}}{(m+1)!R_m(x)}=\left(1+(m+1)!\displaystyle\sum_{k=1}^{\infty}\frac{x^k}{(m+1+k)!}\right)^{-1}.
\]

Conrey and Ghosh~\cite{conrey1988zeros} note that $\left|(m+1)!\displaystyle\sum_{k=1}^{\infty}\frac{x^k}{(m+1+k)!}\right|<1$ when $|x|<\frac{1}{2}(m+2)$, so this can be expanded as a convergent geometric series in such cases.
Thus, \[
\frac{1}{R_m(x)}=\frac{(m+1)!}{x^{m+1}}\left(1+\displaystyle\sum_{j=1}^\infty \left(-(m+1)!\displaystyle\sum_{k=1}^{\infty}\frac{x^k}{(m+1+k)!}\right)^j\right).
\]

The only time an $x$ term can appear in the double infinite sum is when $k,j=1$, so this term has coefficient $-(m+1)!\frac{1}{(m+1+1)!}=\frac{-1}{m+2}$ as desired.

\end{proof}

Note that for $i\in S$ we have $|\alpha_i|<m/2$ from Proposition \ref{D1}(a). Thus we can use Lemma \ref{D3} to conclude that \begin{equation}\displaystyle\sum_{i\in S}\alpha_i^{-1}e^{-\alpha_i}=\displaystyle\sum_{i\in S}\frac{\alpha_i^{-1}}{R_m(\alpha_i)}=(m+1)!\displaystyle\sum_{i\in S}\displaystyle\sum_{k=0}^{\infty}c_k\alpha_i^{k-m-2},\label{eq.Ssum}\end{equation}
where $c_0=1$ and $c_1=\frac{-1}{m+2}$.

Using the values of $\displaystyle\sum_{i=1}^m \alpha_i^{-t}$ for $t=1,\cdots,m+2$ from Proposition~\ref{D1}(c) and that $c_0=1$ and $c_1=\f{-1}{m+2}$, we see that

{\footnotesize \[
\displaystyle\sum_{k=0}^{m+1}\displaystyle\sum_{i\in S}c_k\alpha_i^{k-m-2}=\displaystyle\sum_{k=0}^{m+1}\l(\displaystyle\sum_{i=1}^m c_k\alpha_i^{k-m-2}-\displaystyle\sum_{i\in L}c_k\alpha_i^{k-m-2}\r)=\frac{-1}{m!}-\frac{1}{m!(m+2)}-c_{m+1}-\displaystyle\sum_{k=0}^{m+1}\displaystyle\sum_{i\in L}c_k\alpha_i^{k-m-2}.
\]}
Plugging this into \eqref{eq.Ssum} gives
{\small \begin{align*}
\displaystyle\sum_{i\in S}\alpha_i^{-1}e^{-\alpha_i}&=(m+1)!\left(\frac{-1}{m!}-\frac{1}{m!(m+2)}-c_{m+1}-\displaystyle\sum_{k=0}^{m+1}\displaystyle\sum_{i\in L}c_k\alpha_i^{k-m-2}+\displaystyle\sum_{i\in S}\displaystyle\sum_{k=m+2}^{\infty}c_k\alpha_i^{k-m-2}\right)\\
&=-(m+1)-\frac{m+1}{m+2}-(m+1)!\left(c_{m+1}+\displaystyle\sum_{k=0}^{m+1}\displaystyle\sum_{i\in L}c_k\alpha_i^{k-m-2}-\displaystyle\sum_{i\in S}\displaystyle\sum_{k=m+2}^{\infty}c_k\alpha_i^{k-m-2}\right)\\
&=-m-2+\frac{1}{m+2}-(m+1)!\left(c_{m+1}+\displaystyle\sum_{k=0}^{m+1}\displaystyle\sum_{i\in L}c_k\alpha_i^{k-m-2}-\displaystyle\sum_{i\in S}\displaystyle\sum_{k=m+2}^{\infty}c_k\alpha_i^{k-m-2}\right).
\end{align*}}

By Lemma \ref{D2}, we have $\displaystyle\sum_{i=1}^m\alpha_i^{-1}e^{-\alpha_i}=\displaystyle\sum_{i\in S}\alpha_i^{-1}e^{-\alpha_i}+O(e^{-\beta m})$ for some $\beta>0$, so we are left to consider the sum over $S$. The first three terms above match our claimed expression, so it suffices to show that the leftover terms $-(m+1)!\left(c_{m+1}+\displaystyle\sum_{k=0}^{m+1}\displaystyle\sum_{i\in L}c_k\alpha_i^{k-m-2}-\displaystyle\sum_{i\in S}\displaystyle\sum_{k=m+2}^{\infty}c_k\alpha_i^{k-m-2}\right)$ are $O(e^{-\beta m})$.


Using the Triangle Inequality, and recalling that $|c_k|\leq{(\frac{2}{m})^k}$ for all $k$ by Proposition~\ref{D1}(b), we obtain

\begin{align*}
&\left|c_{m+1}+\displaystyle\sum_{k=0}^{m+1}\displaystyle\sum_{i\in L}c_k\alpha_i^{k-m-2}-\displaystyle\sum_{i\in S}\displaystyle\sum_{k=m+2}^{\infty}c_k\alpha_i^{k-m-2}\right|\\\leq&{(2/m)^{m+1}+\displaystyle\sum_{k=0}^{m+1}\displaystyle\sum_{i\in L}(2/m)^k|\alpha_i|^{k-m-2}+\displaystyle\sum_{i\in S}\displaystyle\sum_{k=m+2}^{\infty}(2/m)^k|\alpha_i|^{k-m-2}}.
\end{align*}

Since $|L|,|S|\leq{m}$, this is at most
$(2/m)^{m+1}+m\displaystyle\sum_{k=0}^{m+1}(2/m)^k|\alpha_i|^{k-m-2}+m\displaystyle\sum_{k=m+2}^{\infty}(2/m)^k|\alpha_i|^{k-m-2}$.

Now we make use of Proposition \ref{D1}(a). In the first summation, the quantity $k-m-2$ is negative, so $|\alpha_i|^{k-m-2}\leq{(me^{\gamma^--1})^{k-m-2}}$.  In the second summation, $k-m-2$ is nonnegative, so $|\alpha_i|^{k-m-2}\leq{(me^{\gamma^+-1})^{k-m-2}}$. Putting this altogether, we have
\begin{align*}
&\left|c_{m+1}+\displaystyle\sum_{k=0}^{m+1}\displaystyle\sum_{i\in L}c_k\alpha_i^{k-m-2}-\displaystyle\sum_{i\in S}\displaystyle\sum_{k=m+2}^{\infty}c_k\alpha_i^{k-m-2}\right|\\&\leq{(2/m)^{m+1}+m\displaystyle\sum_{k=0}^{m+1}(2/m)^k(me^{\gamma^--1})^{k-m-2}+m\displaystyle\sum_{k=m+2}^{\infty}(2/m)^k(me^{\gamma^+-1})^{k-m-2}}\\
&=(2/m)^{m+1}+\frac{m^{-m-1}}{(e^{\gamma^--1})^{m+2}}\displaystyle\sum_{k=0}^{m+1}(2e^{\gamma^--1})^k+\frac{m^{-m-1}}{(e^{\gamma^+-1})^{m+2}}\displaystyle\sum_{k=m+2}^{\infty}(2e^{\gamma^+-1})^k.
\end{align*}

Note that the first summation is finite and it is bounded above by the convergent infinite sum $\sum_{k=0}^\infty (2e^{\gamma^--1})^k$, which is a constant. Since $0<\gamma^+<1-\log{2}$, we have that $2e^{\gamma^+-1}<1$, so the second summation also converges. Let $C$ be some constant which serves as an upper bound for both of these summations.  The total expression is then at most
\[
(2/m)^{m+1}+2C\frac{m^{-m-1}}{(e^{\gamma^--1})^{m+2}}.
\]

We need to show that this expression will still be $O(e^{-\beta m})$ after we multiply it by $(m+1)!$. By the Stirling approximation, $m!$ is asymptotically $\sqrt{2\pi m}(\frac{m}{e})^{m}$, so for sufficiently large $m$, we have  
\[
(m+1)!=(m+1)m!\leq{(m+1)(m-1)(m/e)^{m}}\leq{m^2(m/e)^{m}}.
\]

Now we examine \begin{align*}m^2(m/e)^{m}\left((2/m)^{m+1}+2C\frac{m^{-m-1}}{(e^{\gamma^--1})^{m+2}}\right)&=\frac{2^{m+1}}{e^m}m+2C\frac{m}{e^m(e^{\gamma^--1})^{m+2}}.\end{align*}

Let $D:=(e^{\gamma^--1})^{-1}$. Since $\gamma^-<1-\log{2}$, we have that $e^{\gamma^--1}<\frac{1}{2}$ and hence that $D>2$. However, $\gamma^-$ can be chosen arbitrarily close to $1-\log{2}$ to ensure $D<e$.  In this case

\begin{align*}m^2(m/e)^{m}\left((2/m)^{m+1}+2C\frac{m^{-m-1}}{(e^{\gamma^--1})^{m+2}}\right)&=me^{-m}\left(2^{m+1}+2CD^{m+2}\right)\\
&=O\left(me^{-m}(2^m+CD^2D^m)\right)\\
&=O\left(me^{-m}D^m\right)\\
&=O\left(e^{m(\frac{\log{m}}{m}-1+\log{D})}\right).
\end{align*}
Note that $-1+\log{D}<0$ and that the $\frac{\log{m}}{m}$ is negligible for sufficiently large $m$, so indeed the sum we are considering is $O(e^{-\beta m})$ for some positive $\beta$, completing our proof of Theorem~\ref{thm:C1}(b).
	
	\section{Proof of Theorem~\ref{thm:C}}
	At a high level, the proof of Theorem~\ref{thm:C}  revolves around showing that $\Pr[L_{m,n}(\pi)\ge k]$ tends to 0 or 1 depending on if $nm^k/k!$ tends to 0 or infinity as $n$ tends to infinity.  The following lemma (which we will apply for $t!\approx n$) will be used to determine the threshold when $nm^k/k!$ shifts from being very small to very large.  Here and throughout the text, $\log$ denotes the natural logarithm.
	\begin{lem}\label{lem:asy}
	Given integers $m\ge 1$ and $t\ge 2$, let $C>0$ be a real number such that $k=t+\f{C\log m}{\log t}t$ is an integer. We have
	
	\[k!\ge t! \cdot m^{Ct},\]
	and if $t\ge m^{10C}$, we have
	\[k!\le t!\cdot (1.1)^k m^{Ct}\]
	\end{lem}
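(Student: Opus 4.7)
The plan is to set $s:=k-t=\frac{Ct\log m}{\log t}$, so that the ratio $k!/t!$ factors as the telescoping product $\prod_{j=1}^{s}(t+j)$. The entire argument is an estimate of this product: the lower bound comes from replacing every factor by $t$, and the upper bound from replacing every factor by $k$, with the condition $t\ge m^{10C}$ used to show that these two bounds are close.

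For part \textup{(i)}, I would estimate $\prod_{j=1}^s(t+j)\ge t^s$, and then observe that the exponent has been engineered precisely so that
\[t^s=\exp(s\log t)=\exp\!\left(\tfrac{Ct\log m}{\log t}\cdot \log t\right)=\exp(Ct\log m)=m^{Ct},\]
which gives $k!/t!\ge m^{Ct}$ with no slack at all.

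For part \textup{(ii)}, I would estimate $\prod_{j=1}^{s}(t+j)\le k^s=t^s(1+s/t)^s$, reuse $t^s=m^{Ct}$, and then control $(1+s/t)^s$ using $\log(1+x)\le x$ to get $(1+s/t)^s\le \exp(s^2/t)$. The assumption $t\ge m^{10C}$ translates to $\log t\ge 10C\log m$, which gives
\[\frac{s^2}{t}=\frac{C^2 t(\log m)^2}{(\log t)^2}\le \frac{C^2 t(\log m)^2}{(10C\log m)^2}=\frac{t}{100}.\]
Since $\log(1.1)>1/100$, one has $\exp(t/100)\le (1.1)^t\le (1.1)^k$, and combining the bounds gives $k!/t!\le (1.1)^k m^{Ct}$.

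There is really no serious obstacle here: the lemma is a clean algebraic identity for the lower bound, and the upper bound is a routine calculation once one spots that the hypothesis $t\ge m^{10C}$ is exactly what is needed to make $s^2/t$ a small multiple of $t$. The only thing to be careful about is the bookkeeping in the exponents, and in particular to verify that the numerical constants (replacing $10C$ and $1.1$ with the thresholds appearing in the statement) are consistent; since $\log(1.1)\approx 0.0953 > 0.01$, the factor of $10$ in $m^{10C}$ gives comfortable room.
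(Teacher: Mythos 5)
Your proof is correct and follows essentially the same approach as the paper: write $k!/t!$ as the product $\prod_{j=1}^{k-t}(t+j)$, bound it below by $t^{k-t}=m^{Ct}$ and above by $k^{k-t}$, and then use $t\ge m^{10C}$ to control the excess factor $(k/t)^{k-t}$. The paper does this last step slightly more directly, observing that $t\ge m^{10C}$ gives $k\le 1.1t$ and hence $k^{k-t}\le (1.1)^k t^{k-t}$, whereas you bound $(1+s/t)^s\le \exp(s^2/t)\le \exp(t/100)\le (1.1)^t$; both are fine.
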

	Here and throughout the text we define the falling factorial $(N)_a:=N(N-1)\cdots(N-a+1)$. 
	\begin{proof}
		Note that $k!=t!(k)_{k-t}$, so it suffices to show \[m^{Ct}\le (k)_{k-t}\le (1.1)^k m^{Ct},\] with the upper bound holding when $t\ge m^{10C}$. When $t\ge m^{10C}$, we have $\log t\ge 10C\log m$, so $k=t+\f{C\log m}{\log t}t \le 1.1t$.  This implies
		\[(k)_{k-t}\le k^{k-t}\le (1.1t)^{k-t}\le (1.1)^k \cdot t^{k-t}=(1.1)^k\cdot m^{Ct}.\]
		Similarly $(k)_{k-t}\ge t^{k-t}=m^{Ct}$ for all $t$, proving the result.
	\end{proof}
	
	Before delving into the details of the proof, we introduce some auxiliary definitions that will make our arguments somewhat cleaner. The main idea is that we wish to reduce multiset permutations to set permutations by labeling each of the $m$ copies of $i\in [n]$.  
	
	To this end, let $\S_{m,n}^*$ denote the set of permutations of the set $\{i_h: i\in [n],\ h\in [m]\}$.  For example, $\tau':=3_12_13_21_22_21_1\in \S_{2,3}^*$.  If $\tau\in \S_{m,n}^*$ contains a subsequence of the form $(w_1)_{x_1}\cdots (w_k)_{x_k}$, then we will say that $\tau$ has a subsequence of type $(w,x)$ where $w=w_1\cdots w_k$ and $x=x_1\cdots x_k$.  We say that $\tau\in \S_{m,n}^*$ has a subsequence of type $w$ if it has a subsequence of type $(w,x)$ for some $x$.  For example, $\tau'$ defined above has a subsequence of type $(12,22)$ and hence of type $12$, but it contains no subsequence of type $123$.
	\begin{obs}\label{obs2}
		If $\pi\in \S_{m,n}$ and $\tau\in \S_{m,n}^*$ are chosen uniformly at random, then for any word $w$ with letters in $[n]$ we have
		\[\Pr[\pi\tr{ contains }w\tr{ as a subsequence}]=\Pr[\tau\tr{ contains a subsequence of type }w].\]
	\end{obs}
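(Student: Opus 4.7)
The plan is to reduce the claim to a natural forgetful map between $\S_{m,n}^*$ and $\S_{m,n}$. Define $\psi : \S_{m,n}^* \to \S_{m,n}$ by dropping the subscripts on every letter, so that for example $\psi(3_1 2_1 3_2 1_2 2_2 1_1) = 321221$. I would first check that $\psi$ is exactly $(m!)^n$-to-one: given $\pi \in \S_{m,n}$ and any $i \in [n]$, the $m$ positions in $\pi$ occupied by the letter $i$ can be assigned the subscripts in $[m]$ in exactly $m!$ ways, and these choices are independent across different values of $i$. In particular, if $\tau$ is uniform on $\S_{m,n}^*$, then $\psi(\tau)$ is uniform on $\S_{m,n}$, so $\psi(\tau)$ is equidistributed with $\pi$.

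Next I would verify the event-level identity that $\tau$ contains a subsequence of type $w$ if and only if $\psi(\tau)$ contains $w$ as a subsequence. The forward direction is immediate: a subsequence of $\tau$ of type $(w,x)$, once one drops subscripts, becomes an occurrence of $w$ as a subsequence of $\psi(\tau)$. For the reverse direction, any occurrence of $w = w_1 \cdots w_k$ as a subsequence of $\psi(\tau)$ corresponds to a choice of positions $p_1 < \cdots < p_k$ in $\psi(\tau)$ (hence in $\tau$) with $\psi(\tau)_{p_j} = w_j$; then reading off the subscripts of $\tau$ at these positions gives some tuple $x = x_1 \cdots x_k$, so $\tau$ has a subsequence of type $(w,x)$ and therefore of type $w$.

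Combining these two points finishes the proof: the events in question coincide as subsets of $\S_{m,n}^*$ on the right-hand side and of $\S_{m,n}$ on the left-hand side, and they are related by $\psi$ which pushes the uniform measure to the uniform measure, so
\[\Pr[\tau \text{ contains a subsequence of type } w] = \Pr[\psi(\tau) \text{ contains } w \text{ as a subsequence}] = \Pr[\pi \text{ contains } w \text{ as a subsequence}].\]
There is no real obstacle here; the only thing to be careful about is the bijection on fibers of $\psi$, which is a clean combinatorial check once one separates the subscript-assignment problem letter by letter.
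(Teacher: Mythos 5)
Your proof is correct and follows exactly the approach the paper sketches: the paper defines the same subscript-forgetting map (there called $\phi$) and omits the details, while you supply them — the $(m!)^n$-to-one fiber count and the event-level equivalence.
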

	The intuition for this observation is as follows.  We can view $\{i_h: i\in [n],\ h\in [m]\}$ as a deck of cards with $n$ card types each having $m$ suits, and we view $\tau\in \S_{m,n}^*$ as a way of shuffling this deck.  The property that $\tau$ contains a subsequence of type $w$ is independent of the suits of the cards.  Thus if we let $\pi\in \S_{m,n}$ denote the shuffling $\tau$ after ignoring suits, then $\pi$ contains $w$ as a subsequence if and only if $\tau$ contains a subsequence of type $w$.  More formally, one can prove this result by considering the map $\phi:\S_{m,n}^*\to \S_{m,n}$ which deletes the subscripts in the letters of $\tau\in \S_{m,n}^*$.  We omit the details.

	\subsection{The Upper Bound}
	To prove the upper bound of Theorem~\ref{thm:C}, essentially the only fact we need is that there are at most $n$ continuously increasing subsequences of a given length $k$, and as such our proof easily generalizes to a wider set of subsequence problems.
	
	To this end, let $\c{W}$ be a set of words with letters in $[n]$.  For $\pi\in \S_{m,n}$, we define $L_{m,n}(\pi;\c{W})$ to be the maximum length of a word $w\in \c{W}$ which appears as a subsequence in $\pi$.   For example, if $\c{W}$ consists of every word of the form $i(i+1)\cdots j$ for some $i\le j$, then $L_{m,n}(\pi;\c{W})=L_{m,n}(\pi)$.  We will say that a set of words $\c{W}$ is \textit{prefix closed} if for every $w_1\cdots w_k\in \c{W}$ we have $w_1\cdots w_\ell \in \c{W}$ for all $\ell\le k$.

	\begin{lem}\label{lem:contUpper}
		Let $\c{W}$ be a prefix closed set of words with letters in $[n]$ and let $\c{W}_k\sub \c{W}$ be the set of words of length $k$ in $\c{W}$.  If $\pi\in \S_{m,n}$ is chosen uniformly at random, then
		\[\Pr[L_{m,n}(\pi;\c{W})\ge k]\le \f{|\c{W}_k|m^k}{k!}.\]
	\end{lem}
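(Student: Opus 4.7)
The plan is to prove this by a straightforward union bound, with prefix closedness used only to reduce to the length-$k$ case. First, I would observe that if $L_{m,n}(\pi;\c{W})\ge k$, then some word $w'\in \c{W}$ of length at least $k$ appears as a subsequence of $\pi$. By prefix closedness, the length-$k$ prefix $w$ of $w'$ is itself in $\c{W}_k$, and $w$ is still a subsequence of $\pi$. Hence
\[\Pr[L_{m,n}(\pi;\c{W})\ge k]\le \sum_{w\in \c{W}_k}\Pr[\pi\tr{ contains }w\tr{ as a subsequence}],\]
so it suffices to show that for each fixed $w\in \c{W}_k$ the probability that $\pi$ contains $w$ as a subsequence is at most $m^k/k!$.

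Next I would pass to the labeled model via Observation~\ref{obs2}: if $\tau\in \S_{m,n}^*$ is uniformly random, then the probability that $\pi$ contains $w=w_1\cdots w_k$ as a subsequence equals the probability that $\tau$ contains a subsequence of type $w$. Applying a union bound over all labelings $x=x_1\cdots x_k\in [m]^k$ gives
\[\Pr[\tau\tr{ contains a subsequence of type }w]\le \sum_{x\in [m]^k}\Pr[\tau\tr{ contains a subsequence of type }(w,x)].\]
For any fixed $(w,x)$ the letters $(w_1)_{x_1},\ldots,(w_k)_{x_k}$ are $k$ specified elements of the underlying set (or the event is vacuously impossible when two coincide, in which case the probability is $0$), and by symmetry of the uniform permutation $\tau$, the probability that they appear in the specified order in $\tau$ is exactly $1/k!$. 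Combining these two bounds yields $\Pr[\pi\tr{ contains }w]\le m^k/k!$, and summing over the $|\c{W}_k|$ choices of $w$ gives the desired inequality.

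There is no real obstacle here; the argument is essentially a first-moment bound. The only slightly delicate point is the reduction via prefix closedness, which ensures that controlling length exactly $k$ controls length at least $k$; this is why the hypothesis on $\c{W}$ is included. I would write the proof in this order: apply prefix closedness, union bound over $\c{W}_k$, invoke Observation~\ref{obs2}, union bound over labelings, and finally use the $1/k!$ probability for a fixed ordered tuple of distinct cards.
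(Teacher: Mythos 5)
Your proposal is correct and takes essentially the same approach as the paper: both use prefix closedness to reduce to $\c{W}_k$, pass to the labeled model $\S_{m,n}^*$ via Observation~\ref{obs2}, and apply a first-moment bound (the paper phrases it as Markov's inequality on a sum of indicators over pairs $(w,x)$, which is the same as your two-stage union bound) together with the observation that a fixed ordered $k$-tuple of distinct labeled cards appears in order with probability $1/k!$. Your remark that the probability is $0$ (hence still $\le 1/k!$) when the labeled letters coincide is a small extra care the paper elides by assuming distinctness.
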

	\begin{proof}
		For $\tau\in \S_{m,n}^*$ we define $L_{m,n}^*(\tau;\c{W})$ to be the length of a longest $w\in \c{W}$ such that $\tau$ contains a subsequence of type $w$.  By Observation~\ref{obs2}, it suffices to bound  $\Pr[L_{m,n}^*(\tau;\c{W})\ge k]$ with $\tau$ chosen uniformly at random from $\S_{m,n}^*$.
		
		Because $\c{W}$ is prefix closed, we have $L_{m,n}^*(\tau;\c{W})\ge k$ if and only if $\tau$ contains some subsequence of type $w\in \c{W}_k$, and by definition this happens if and only if $\tau$ contains some subsequence of type $(w,x)$ with $w\in \c{W}_k$ and $x\in [m]^k$.  For $w\in \c{W}_k$ and $x\in [m]^k$, let $1_{w,x}(\tau)$ be the indicator variable which is 1 if $\tau$ contains a subsequence of type $(w,x)$ and which is 0 otherwise.  Let $X(\tau)=\sum_{w\in \c{W}_k,x\in [m]^k} 1_{w,x}(\tau)$.  By our observations above and Markov's inequality, we find
		\[\Pr[L_{m,n}^*(\tau;\c{W})\ge k]=\Pr[X(\tau)\ge 1]\le \E[X(\tau)]=\sum_{w\in \c{W}_k,x\in [m]^k} \Pr[1_{w,x}(\tau)=1]=|\c{W}_k|m^k\cdot \rec{k!},\]
		where the last step used that $1_{w,x}(\tau)=1$ if and only if the distinct letters $(w_1)_{x_1},\ldots,(w_k)_{x_k}$ appear in the correct relative order in $\tau$, and this happens with probability $1/k!$.  This proves the result.
	\end{proof}
	\begin{prop}\label{prop:prefix}
		Let $\c{W}$ be a prefix closed set of words with letters in $[n]$ and let $\c{W}_k\sub \c{W}$ be the words of length $k$ in $\c{W}$.  Assume there exists an $N$ such that $|\c{W}_k|\le N$ for all $k$. If $\pi\in \S_{m,n}$ is chosen uniformly at random and $N$ is sufficiently large in terms of $m$, then
		\[\E[L_{m,n}(\pi;\c{W})]\le \Gam^{-1}(N)+O\l(1+\f{\log m}{\log (\Gam^{-1}(N))} \Gam^{-1}(N)\r).\]	
	\end{prop}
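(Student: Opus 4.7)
The plan is to combine Lemma~\ref{lem:contUpper} with the tail-sum formula
\[\E[L_{m,n}(\pi;\c{W})] = \sum_{k \ge 1} \Pr[L_{m,n}(\pi;\c{W}) \ge k],\]
splitting the sum at a carefully chosen cutoff $k_0$. First, let $t$ be the smallest integer with $t! \ge N$, so that $t = \Gam^{-1}(N) + O(1)$, fix an absolute constant $C\ge 2$, and set $k_0 := t + \lceil \f{C \log m}{\log t} t \rceil$. Then $k_0 = \Gam^{-1}(N) + \f{C\log m}{\log \Gam^{-1}(N)} \Gam^{-1}(N) + O(1)$, so the head bound $\sum_{k=1}^{k_0}\Pr[L\ge k] \le k_0$ already matches the claimed upper bound. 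It remains to show $\sum_{k > k_0}\Pr[L\ge k] = O(1)$.

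For the tail, Lemma~\ref{lem:contUpper} together with $|\c{W}_k|\le N$ gives $\Pr[L \ge k]\le Nm^k/k!$. Since the ratio between consecutive terms is $m/(k+1)\le 1/2$ for $k\ge k_0\ge 2m$ (guaranteed by taking $N$ large in terms of $m$), the tail is dominated by twice its first term $Nm^{k_0}/k_0!$. Applying Lemma~\ref{lem:asy} with this $t$ and $C$ yields $k_0!\ge t!\,m^{Ct}\ge Nm^{Ct}$, so
\[\f{Nm^{k_0}}{k_0!} \le m^{k_0 - Ct} = m^{t\left(1 + \f{C\log m}{\log t} - C\right) + O(1)}.\]
Choosing $N$ large enough that $t\ge m^{10}$ makes $\f{C\log m}{\log t}\le 1/10$; with $C=2$ the exponent is then at most $-0.8t + O(1)$, so this first term, and hence the whole tail, is in fact $o(1)$. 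Adding this to the head contribution gives the claimed bound.

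The main obstacle is purely technical bookkeeping: $\Gam^{-1}(N)$ is not generally an integer, the quantity $k_0 - t$ must be an integer for Lemma~\ref{lem:asy} to apply as stated, and the ``$t$ large compared to $m$'' condition needed to beat the $\f{C\log m}{\log t}$ factor must be absorbed into the ``$N$ sufficiently large in terms of $m$'' assumption. These are all handled by taking ceilings and choosing $C$ slightly larger than the borderline value, contributing only an additive $O(1)$ to the final estimate.
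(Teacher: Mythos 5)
Your proposal is correct and follows essentially the same route as the paper: split the tail-sum for the expectation at a cutoff $k_0 \approx t + \frac{2\log m}{\log t}t$ where $t!\approx N$, invoke Lemma~\ref{lem:contUpper} for the tail terms, and then control the tail by a geometric ratio argument together with Lemma~\ref{lem:asy}. (Two cosmetic remarks: with $C=2$ and $t\ge m^{10}$ you get $\frac{C\log m}{\log t}\le 1/5$, not $1/10$, which still yields an exponent $\le -0.8t+O(1)$; and to apply Lemma~\ref{lem:asy} with $k_0=t+\lceil\cdot\rceil$ you should replace $C$ by the slightly larger $C'\ge 2$ for which $k_0-t=\frac{C'\log m}{\log t}t$ exactly, which only helps the lower bound on $k_0!$.)
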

	In particular, for $\c{W}$ the set of continuously increasing words $i(i+1)\cdots j$, we have $|\c{W}_k|\le n$ for all $k$, so taking $N=n$ gives the upper bound of Theorem~\ref{thm:C}.  As another example, if $\c{W}$ is the set of arithmetic progressions, then one can take $N=n^2$ to give an upper bound of roughly $\Gamma^{-1}(n^2)$ for $\E[L_{m,n}(\pi;\c{W})]$. Recent work of Goh and Zhao~\cite{goh2020arithmetic} shows that this bound for arithmetic progressions is tight.
	\begin{proof}
		 By using Lemma~\ref{lem:contUpper} and the trivial bound $\Pr[L_{m,n}(\pi;\c{W})\ge K]\le 1$, we find for all integers $k$ that
		\begin{align}\E[L_{m,n}(\pi;\c{W})]&=\sum_{K\ge 1} \Pr[L_{m,n}(\pi;\c{W})\ge K]\le k+\sum_{K>k} \f{N m^K}{K!}.\label{eq:contUpper}\end{align}

		Let $t$ be the integer such that $(t-1)!<N\le t!$ and let $k=\ceil{t+\f{2\log m}{\log t}t}$.  Note that this implies $k=t+\f{C \log m}{\log t}t$ for some $C\ge 2$.  Assume $N$ is sufficiently large so that $2m\le k\le 2t$.  By Lemma~\ref{lem:asy}, we have for $K>k\ge t$ that
		\[\f{N m^K}{K!}\le \f{Nm^k}{k!}\cdot \l(\f{m}{k}\r)^{K-k}\le \f{Nm^k}{t!m^{Ct}}\cdot 2^{k-K}\le 2^{k-K},\]
		with this last step using $k\le 2t\le Ct$ and $N\le t!$.	Plugging this and our choice of $k$ into \eqref{eq:contUpper} gives, after setting $\ell=K-k$,
		\[\E[L_{m,n}(\pi;\c{W})]\le \ceil{t+\f{2\log m}{\log t}t}+\sum_{\ell >0} 2^{-\ell}\le t+\f{2\log m}{\log t}t+2.\]
		This gives the desired result since $t< \Gam^{-1}(N)$.
	\end{proof}
	
	
	
	\subsection{The Lower Bound}
	For $x,y\in [m]^n$, we define their \textit{Hamming distance} $d_H(x,y):=|\{i\in[n]: x_i\ne y_i\}|$.  Our key lemma for proving the lower bound of Theorem~\ref{thm:C} is the following:
	\begin{lem}\label{lem:inclusionExclusion}
		Let $T\sub [m]^n$ be such that any distinct $x,y\in T$ have $d_H(x,y)\ge \del$ for some integer $\del$.  Then
		\[\Pr[L_{m,n}(\pi)=n]\ge \f{|T|}{n!}\l(1-\f{|T|}{\del!}\r).\]
	\end{lem}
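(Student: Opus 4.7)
The plan is to apply a second-order Bonferroni inequality to a carefully chosen family of events indexed by $T$. First, by Observation~\ref{obs2} it suffices, for $\tau\in \S_{m,n}^*$ chosen uniformly at random, to lower bound the probability that $\tau$ contains a subsequence of type $12\cdots n$ (which is equivalent to $L_{m,n}(\pi)=n$). To this end, for each $x\in T$ I would let $A_x$ denote the event that $\tau$ contains the specific labeled subsequence $1_{x_1}, 2_{x_2},\ldots, n_{x_n}$; since the occurrence of any single $A_x$ with $x\in T$ already witnesses a subsequence of type $12\cdots n$, the task reduces to lower bounding $\Pr\left[\bigcup_{x\in T} A_x\right]$.

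Next I would apply the Bonferroni inequality
\[
\Pr\left[\bigcup_{x\in T} A_x\right] \geq \sum_{x\in T}\Pr[A_x] - \sum_{\{x,y\}\subseteq T}\Pr[A_x\cap A_y],
\]
and evaluate each term. Since $A_x$ prescribes the relative order of $n$ specific distinct cards in a uniformly random permutation of $mn$ cards, $\Pr[A_x]=1/n!$. For the pairwise intersection, set $d:=d_H(x,y)$ and observe that the two subsequences together involve exactly $n+d$ distinct labeled cards: one card $i_{x_i}=i_{y_i}$ at each coordinate $i$ where $x_i=y_i$, and two cards $i_{x_i}\ne i_{y_i}$ at each disagreement. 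The event $A_x\cap A_y$ then says that the relative order of these $n+d$ cards respects the partial order ``every level-$i$ card precedes every level-$j$ card for $i<j$'', and the number of linear extensions of this poset is $\prod_{i=1}^n (\#\text{level-}i\text{ cards})! = 2^d$. Hence $\Pr[A_x\cap A_y]=2^d/(n+d)!$.

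To finish I would invoke the elementary inequality $\binom{n+d}{d}\geq \binom{2d}{d}\geq 2^d$, which holds since $d\leq n$ and since $\binom{2d}{d}=\prod_{i=1}^d\frac{d+i}{i}\geq 2^d$. This gives $\Pr[A_x\cap A_y]\leq 1/(n!\,d!)\leq 1/(n!\,\delta!)$, using $d\geq \delta$. Combining with $\binom{|T|}{2}\leq |T|^2/2$, Bonferroni yields
\[
\Pr\left[\bigcup_{x\in T} A_x\right]\geq \frac{|T|}{n!} - \frac{|T|^2}{2\,n!\,\delta!}\geq \frac{|T|}{n!}\left(1-\frac{|T|}{\delta!}\right),
\]
which is the claimed bound. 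The only step requiring real care is the exact computation $\Pr[A_x\cap A_y]=2^d/(n+d)!$, since one must correctly identify which labeled cards are shared between the two subsequences and then count linear extensions of the resulting poset; the rest is routine.
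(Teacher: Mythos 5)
Your reduction via Observation~\ref{obs2}, the choice of events $A_x$, and the use of the second-order Bonferroni inequality all match the paper's proof. The gap is in your evaluation of the pairwise intersection. You assert that $A_x\cap A_y$ is equivalent to the ranked poset ``every level-$i$ card precedes every level-$j$ card for $i<j$,'' and conclude $\Pr[A_x\cap A_y]=2^d/(n+d)!$. This characterization is wrong: the event $A_x\cap A_y$ only imposes the two chains $1_{x_1}<2_{x_2}<\cdots<n_{x_n}$ and $1_{y_1}<2_{y_2}<\cdots<n_{y_n}$, and when two consecutive coordinates $i,i+1$ both lie in the disagreement set, the cross-relations $i_{x_i}<(i+1)_{y_{i+1}}$ and $i_{y_i}<(i+1)_{x_{i+1}}$ are \emph{not} implied. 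For example with $n=3$, $x_1\ne y_1$, $x_2\ne y_2$, $x_3=y_3$, the actual poset has $6$ linear extensions (the ordering $1_{y_1}\,2_{y_2}\,1_{x_1}\,2_{x_2}\,3$ satisfies both chains but violates your ranked poset), not $2^d=4$. So the true probability here is $6/5!=1/20$, strictly larger than your $4/5!=1/30$. Since Bonferroni requires an \emph{upper} bound on $\Pr[A_x\cap A_y]$, and your formula is in general a strict lower bound, the inequality you need does not follow from your computation.

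The conclusion $\Pr[A_x\cap A_y]\le 1/(n!\,\delta!)$ is nevertheless true, but requires a different argument. The paper's route avoids computing the intersection probability exactly: it picks any $\delta$ coordinates on which $x$ and $y$ differ, lets $A_y^S$ be the event that $y$'s cards at just those $\delta$ coordinates appear in increasing order, notes that $A_y\subseteq A_y^S$, and observes that $A_x$ and $A_y^S$ involve \emph{disjoint} labeled cards and hence are independent. This gives $\Pr[A_x\cap A_y]\le\Pr[A_x]\Pr[A_y^S]=1/(n!\,\delta!)$ directly, sidestepping the poset-counting question entirely. If you replace your exact-probability claim with this independence argument, the rest of your write-up goes through.
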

	
	\begin{proof}
		For $\tau\in \S_{m,n}^*$, let  $L^*_{m,n}(\tau)$ denote the length of the longest subsequence of $\tau$ of type $i(i+1)\cdots j$. By Observation~\ref{obs2}, it suffices to prove this lower bound for $\Pr[L_{m,n}^*(\tau)=n]$ where $\tau\in \S_{m,n}^*$ is chosen uniformly at random.  For $x\in [m]^n$, let $A_x(\tau)$ be the event that $\tau$ contains a subsequence of type $(12\cdots n,x)$.  Observe that
		 \begin{align}\Pr[L_{m,n}^*(\tau)=n]&=\Pr[\bigcup_{x\in [m]^n}A_x(\tau)]\ge \Pr[\bigcup_{x\in T}A_x(\tau)]\nonumber\\&\ge \sum_{x\in T}\Pr[A_x(\tau)]-\sum_{x,y\in T,\ x\ne y}\Pr[A_x(\tau)\cap A_y(\tau)],\label{eq:contLower}\end{align}
		where the last inequality used the Bonferroni inequality (which is essentially a weakening of the principle of inclusion-exclusion); see e.g. \cite{spencer2014asymptopia} for further details on this inequality.  To bound \eqref{eq:contLower}, we use the following: \begin{claim}
		If $x,y\in T$ with $x\ne y$, then $\Pr[A_x(\tau)]=1/n!$ and \[\Pr[A_x(\tau)\cap A_y(\tau)]\le \f{1}{\del!n!}.\]
		\end{claim}
		\begin{proof}
		    Observe that $A_x(\tau)$ occurs if and only if $1_{x_1},\ldots,n_{x_n}$ occur in the correct relative order in $\tau$, so $\Pr[A_x(\tau)]=1/n!$.  Let $S=\{i_1<i_2<\cdots<i_\del\}$ be any set of $\del$ indices $i$ such that $y_i\ne x_i$, and note that such a set exists by assumption of $T$.  Let $A_y^S(\tau)$ be the event that $(i_1)_{y_{i_1}}\cdots (i_\del)_{y_{i_\del}}$ is a subsequence of $\tau$.  Observe that $\Pr[A_y^S(\tau)]=1/\del!$ and that this event is independent of $A_x(\tau)$ since these two events concern disjoint sets of letters.  Because $A_y(\tau)$ implies $A_y^S(\tau)$, we have
		    \[\Pr[A_x(\tau)\cap A_y(\tau)]\le \Pr[A_x(\tau)\cap A_y^S(\tau)]=\rec{\del! n!},\]
		    proving the result.
		\end{proof}
		Plugging the results of this claim into \eqref{eq:contLower} and using that the second sum of  \eqref{eq:contLower} has at most $|T|^2$ terms gives the desired result.
	\end{proof}
	
	The problem of finding $T\sub [m]^n$ such that $d_H(x,y)\ge \del$ with $|T|$ and $\del$ both large is the central problem of coding theory.   In particular, a basic greedy argument from coding theory gives the following:
	\begin{lem}\label{lem:badPairs}
		For any $m\ge 2$ and $1\le \del\le n/2$, there exists $T\sub [m]^n$ such that any two distinct $x,y\in T$ have $d_H(x,y)\ge \del$  and such that \[|T|\ge \f{m^n}{\del {n\choose \del} (m-1)^\del}.\]
	\end{lem}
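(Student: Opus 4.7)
The plan is to use a standard Gilbert--Varshamov style greedy argument from coding theory. Build $T\subseteq [m]^n$ iteratively as follows: start with $T=\emptyset$; at each step, if there exists some $y\in [m]^n$ with $d_H(y,x)\ge\delta$ for every $x\in T$, add such a $y$ to $T$; otherwise, stop. By construction every pair of distinct elements of $T$ has Hamming distance at least $\delta$.

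When the process halts, every $y\in [m]^n$ must lie within Hamming distance $\delta-1$ of some $x\in T$, so the Hamming balls of radius $\delta-1$ centered at elements of $T$ cover $[m]^n$. Since a Hamming ball of radius $\delta-1$ in $[m]^n$ has size exactly $\sum_{k=0}^{\delta-1}\binom{n}{k}(m-1)^k$ (choose the $k$ coordinates that differ, then one of the $m-1$ non-matching letters for each), we obtain
\[
|T|\cdot\sum_{k=0}^{\delta-1}\binom{n}{k}(m-1)^k\ge m^n.
\]

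It remains to bound the ball size by $\delta\binom{n}{\delta}(m-1)^\delta$. Since $\delta\le n/2$, the sequence $\binom{n}{k}$ is non-decreasing on $0\le k\le\delta$, and since $m\ge 2$, the sequence $(m-1)^k$ is also non-decreasing. Hence every term with $0\le k\le\delta-1$ satisfies
\[
\binom{n}{k}(m-1)^k\le \binom{n}{\delta-1}(m-1)^{\delta-1}\le \binom{n}{\delta}(m-1)^{\delta-1}\le \binom{n}{\delta}(m-1)^{\delta},
\]
so summing over the $\delta$ values of $k$ gives $\sum_{k=0}^{\delta-1}\binom{n}{k}(m-1)^k\le \delta\binom{n}{\delta}(m-1)^\delta$. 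Substituting this into the covering inequality yields the claimed bound on $|T|$.

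There is no real obstacle here; the only mildly delicate point is verifying that each term of the volume sum is bounded by $\binom{n}{\delta}(m-1)^\delta$, which requires both hypotheses $\delta\le n/2$ (for the binomial coefficients) and $m\ge 2$ (for the powers of $m-1$) — but these are exactly the assumptions given.
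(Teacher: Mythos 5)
Your proof is correct and is essentially the same argument as the paper's: a maximal (Gilbert--Varshamov) code gives a covering by Hamming balls of radius $\delta-1$, and the ball volume is bounded by $\delta\binom{n}{\delta}(m-1)^\delta$ using $\delta\le n/2$ and $m\ge 2$. The paper phrases the construction as taking $T$ of maximum size rather than building it greedily, but this is the same idea and yields the identical covering inequality.
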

	\begin{proof}
	    Let $T\sub [m]^n$ be a set such that $d_H(x,y)\ge \del$ for distinct $x,y\in T$ and such that $|T|$ is as large as possible.  Let $B(x)=\{y\in [m]^n:d_H(x,y)<\del\}$, and note that for all $x$, \[|B(x)|= \sum_{d=0}^{\del -1} {n\choose d}(m-1)^d\le \del {n\choose \del}(m-1)^\del,\]
	    with this last step using ${n\choose d}\le {n\choose \del}$ for $d<\del\le n/2$.   By the maximality of $|T|$, we must have $[m]^n=\bigcup_{x\in T} B(x)$, and thus
	    \[m^n=|\bigcup_{x\in T} B(x)|\le |T|\cdot \del {n\choose \del}(m-1)^\del,\]
	    giving the desired bound on $|T|$.
	\end{proof}
	Combining  Lemmas~\ref{lem:inclusionExclusion} and \ref{lem:badPairs} gives the following:
	\begin{prop}\label{prop:lowerCont}
		For $n$ sufficiently large in terms of $m\ge 2$, we have
		\[\Pr[L_{m,n}(\pi)=n]\ge \f{(m/1.03)^n}{2n\cdot n!}.\]
	\end{prop}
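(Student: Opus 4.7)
The plan is to combine Lemma~\ref{lem:inclusionExclusion} (applied to the set $T$ produced by Lemma~\ref{lem:badPairs}) with a carefully chosen Hamming-distance parameter $\delta$. Specifically, I would pick a constant $c = c(m)\in (0, 1/2)$ small enough that
\[c\l(1+\log(m-1)-\log c\r) < \log 1.03,\]
which is possible since the left-hand side tends to $0$ as $c\to 0^+$ while the right-hand side is a positive constant. Then set $\delta := \lfloor cn\rfloor$, which satisfies $1\le \delta\le n/2$ for $n$ large, so Lemma~\ref{lem:badPairs} produces a set $T\sub[m]^n$ of pairwise Hamming distance at least $\delta$ with $|T|\ge m^n/(\delta\binom{n}{\delta}(m-1)^\delta)$.

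The next step is to translate this into the two quantitative bounds needed to feed Lemma~\ref{lem:inclusionExclusion}: a lower bound $|T|\ge (m/1.03)^n/n$ and an upper bound $|T|\le \delta!/2$. For the lower bound, I would apply the standard estimate $\binom{n}{\delta}\le (en/\delta)^\delta$ together with Stirling to rewrite $\delta\binom{n}{\delta}(m-1)^\delta \le cn\cdot (e(m-1)/c)^{cn}$; the choice of $c$ then guarantees that for $n$ sufficiently large the ratio $1.03^n\big/c(e(m-1)/c)^{cn}$ is at least $1$, giving $|T|\ge (m/1.03)^n/n$. For the upper bound, I would use $|T|\le m^n$ together with $\delta!\ge (cn/e)^{cn}$. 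Since $(cn/e)^{cn}/m^n = \exp\bigl(cn\log(cn/e) - n\log m\bigr)$ and the $cn\log n$ term eventually dominates, we have $|T|\le \delta!/2$ for $n$ large enough in terms of $m$.

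Plugging both bounds into Lemma~\ref{lem:inclusionExclusion} then concludes the proof:
\[\Pr[L_{m,n}(\pi)=n]\ge \f{|T|}{n!}\l(1-\f{|T|}{\delta!}\r)\ge \f{|T|}{2n!}\ge \f{(m/1.03)^n}{2n\cdot n!}.\]

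The main obstacle is calibrating $\delta$ (equivalently, $c$) so that the specific constant $1.03$ in the statement is attainable. There is a tension: taking $\delta$ too small makes the greedy code in Lemma~\ref{lem:badPairs} provide essentially no room between distinct codewords, inflating $|T|$ past $\delta!$ and killing the $(1-|T|/\delta!)$ factor; but taking $\delta$ too large bloats $(en(m-1)/\delta)^\delta$ and destroys the lower bound on $|T|$. The key observation that makes things work is that the wasted factor $(e(m-1)/c)^c$ can be made arbitrarily close to $1$ by choosing $c$ sufficiently small (depending on $m$), so any constant strictly greater than $1$---in particular $1.03$---can be achieved, while simultaneously the gap between $m^n$ and $\delta!$ only grows with $n$.
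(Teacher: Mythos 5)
Your proof is correct and follows essentially the same route as the paper: combine Lemma~\ref{lem:inclusionExclusion} with the greedy code from Lemma~\ref{lem:badPairs} and calibrate $\delta$ so that the sphere-packing loss is beaten by $1.03^n$ while $\delta!$ dwarfs $|T|$. The only difference is the parametrization — you take $\delta=\lfloor cn\rfloor$ linear in $n$ (with $c=c(m)$ small), while the paper uses $\delta=\frac{2\log m}{\log n}n=o(n)$; your larger $\delta$ makes $\delta!\gg m^n\ge|T|$ automatic, whereas the paper's tighter $\delta$ forces them to first trim $T$ down to size exactly $(m/1.03)^n/n$ so that the $\bigl(1-|T|/\delta!\bigr)$ factor stays close to $1$.
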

	\begin{proof}
		We start with the following fact.
		\begin{claim}
		For all $\ep>0$, there exists a constant $0<c_\ep\le 1$ such that if $\del\le c_\ep n$, then ${n\choose \del}\le (1+\ep)^n$.
		\end{claim}
		\begin{proof}
		    In \cite{cover1999elements} it is noted that ${n\choose \del}\le 2^{H(\del/n) n}$ for all $n,\del$, where $H(p):=-p\log_2(p)-(1-p)\log_2(1-p)$ is the binary entropy function.  Because $H(p)$ tends to 0 as $p$ tends to 0, there exists a constant $c$ such that $2^{H(c)}\le 1+\ep$, and the result follows by taking $c_\ep=c$.
		\end{proof}
		Let $\del=\f{2\log m}{\log n} n$, and assume $n$ is sufficiently large in terms of $m$ so that $\del\le c_{.01}n$, i.e. so that ${n\choose \del}\le (1.01)^n$.  We also choose $n$ sufficiently large so that $\del\le \f{\log 1.01}{\log m}n$, or equivalently so that $m^\del\le (1.01)^n$. Let $T$ be a set as in Lemma~\ref{lem:badPairs}, and by our assumptions above we find
		\[|T|\ge \f{(m/1.03)^n}{n}.\]
		Possibly by deleting elements from $T$ we can assume that $|T|$ is exactly the quantity stated above\footnote{Strictly speaking we should take $|T|$ to be the floor of this value to guarantee that it is an integer.  This would change our ultimate bound by at most a factor of 2, and this factor of 2 can easily be recovered by sharpening our analysis in various places.}, so by Lemma~\ref{lem:inclusionExclusion} it suffices to show $|T|/\del!\le \half$.  Using the inequality $\del!\ge (\del/e)^\del$ and that $n$ is sufficiently large, we have 
		\[\del!\ge \exp\l[\del\cdot (\log(\del)-1)\r]\ge \exp\l[\f{2\log m}{\log n}n\cdot (\log(n)-\log(\log(n))-1)\r]\ge \exp\l[\log(m) n\r]=m^n.\]
		Thus $|T|/\del!\le (1.03)^{-n}/n\le \half$, proving the result.
	\end{proof}
	
	With this we can now prove Theorem~\ref{thm:C}.
	\begin{proof}[Proof of Theorem~\ref{thm:C}]
		The upper bound follows from Proposition~\ref{prop:prefix}.  To prove the lower bound, fix an integer $k$.  For $0\le j< \floor{n/k}$, let $A_j(\pi)$ be the event that $\pi$ contains the subsequence $(jk+1)(jk+2)\cdots((j+1)k)$.  \begin{claim}
		We have the following:
		\begin{itemize}
			\item[\tu{a}] If any $A_j(\pi)$ event occurs, then $L_{m,n}(\pi)\ge k$,
			\item[\tu{b}] The $A_j(\pi)$ events are mutually independent, and
			\item[\tu{c}] For all $j$, we have $\Pr[A_j(\pi)]=\Pr[L_{m,k}(\sig)=k]$ where $\sig\in \S_{m,k}$ is chosen uniformly at random.  
		\end{itemize}
		\end{claim}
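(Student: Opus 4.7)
Part (a) is immediate: by definition $A_j(\pi)$ asserts that $\pi$ contains $(jk+1)(jk+2)\cdots((j+1)k)$, which is a continuously increasing subsequence of length $k$, and so $L_{m,n}(\pi)\ge k$.

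The plan for parts (b) and (c) is to reduce both to a single ``restriction principle,'' which extends the idea behind Observation~\ref{obs1} to several disjoint letter classes: if $S_1,\ldots,S_r\sub[n]$ are pairwise disjoint and $\pi\in\S_{m,n}$ is chosen uniformly at random, then the subwords $\pi|_{S_1},\ldots,\pi|_{S_r}$ obtained from $\pi$ by deleting all letters outside $S_i$ are mutually independent, and each $\pi|_{S_i}$ is uniformly distributed on the multiset permutations of $S_i$ (each letter used $m$ times). I would prove this by a two-stage sampling argument: uniformly select which $m|S_i|$ positions of $[mn]$ are assigned to each letter-class $S_i$, then independently and uniformly arrange the $S_i$-letters within their assigned positions. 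By symmetry this produces a uniform element of $\S_{m,n}$, and the resulting $\pi|_{S_i}$ are manifestly independent and uniform.

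Given this principle, part (b) is immediate: the event $A_j(\pi)$ depends only on the letters of $\pi$ in $\{jk+1,\ldots,(j+1)k\}$, i.e.\ only on $\pi|_{\{jk+1,\ldots,(j+1)k\}}$, and the sets $\{jk+1,\ldots,(j+1)k\}$ for $0\le j<\floor{n/k}$ are pairwise disjoint, so the events $A_j(\pi)$ are mutually independent.

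For part (c), the restriction principle says that $\pi|_{\{jk+1,\ldots,(j+1)k\}}$, after relabeling each letter $jk+i\mapsto i$, is uniformly distributed on $\S_{m,k}$; call the relabeled word $\sig$. Then $A_j(\pi)$ occurs iff $\sig$ contains $12\cdots k$ as a subsequence. Since every continuously increasing subsequence of $\sig\in\S_{m,k}$ has length at most $k$, and the only such subsequence of length exactly $k$ is $12\cdots k$ itself, this is equivalent to $L_{m,k}(\sig)=k$, yielding $\Pr[A_j(\pi)]=\Pr[L_{m,k}(\sig)=k]$.

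The only non-bookkeeping step is establishing the restriction principle, and this is a routine symmetry/counting argument; parts (a), (b), and (c) follow from it in a few lines each.
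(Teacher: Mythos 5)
Your proof is correct and follows essentially the same approach as the paper: part (a) is immediate, and parts (b) and (c) both rest on the observation that each $A_j$ depends only on the relative order of a disjoint class of letters, which the paper asserts directly (for (b)) and via a deletion-and-relabeling map (for (c)). Your ``restriction principle'' and two-stage sampling argument are a sound way to make the paper's terse justification rigorous, but they are not a different route --- just a fuller write-up of the same idea.
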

		\begin{proof}
		    Part (a) is clear, and (b) follows from the fact that the $A_j(\pi)$ events involve the relative ordering of disjoint sets of letters.   For (c), one can consider the map which sends $\pi\in \S_{m,n}$ to $\sig \in \S_{m,k}$ by deleting every letter in $\pi$ except for $(jk+1),\ldots,((j+1)k)$ and then relabeling $jk+i$ to $i$.  It is not difficult to see that $A_j(\pi)$ occurs if and only if $L_{m,k}(\sig)=k$ occurs, and that $\pi$ being chosen uniformly at random implies $\sig$ is chosen uniformly at random.
		\end{proof}
		Let $p_k=\Pr[L_{m,k}(\sig)=k]$ and let $t$ be the integer such that $t!\le n<(t+1)!$.  The claim above implies that for all $k$ we have
		{\small \begin{equation}\Pr[L_{m,n}(\pi)\ge k]\ge \Pr\l[\bigcup A_j(\pi)\r]=1-\Pr\l[\bigcap A_j^c(\pi)\r]=1-(1-p_k)^{\floor{n/k}}\ge 1- \exp\l(-\f{ t!p_k}{2k}\r),\label{eq:expLower}\end{equation}}%
		with this last step using $\floor{n/k}\ge n/2k$ for $k\le n$, that $1-p_k\le e^{-p_k}$, and that $n\ge t!$.
		
		It is easy to see by definition that $p_k\ge 1/k!$ for all $m,k$; and for $n$ sufficiently large, we have $-e^{-t/2}\ge -t^{-1}$.  For such $n$, by \eqref{eq:expLower} we have for all $k\le t-2$ that
		\begin{equation}\Pr[L_{m,n}(\pi)\ge k]\ge 1- \exp\l(-\f{t!}{2k\cdot k!}\r) \ge 1-\exp\l(-\f{t}{2}\r)\ge 1-t^{-1}.\label{eq:small}\end{equation}
		Summing this bound over all $k\le t-2$ for $m=1$ gives 
		\[\E[L_{1,n}(\pi)]\ge \sum_{k\le t-2} \Pr[L_{1,n}(\pi)\ge k]\ge t-3.\]
		This gives the desired lower bound of $\Gam^{-1}(n)+\Om(1)$ for $m=1$ since $t\ge \Gam^{-1}(n)-2$.
		
		We now consider $m\ge 2$.  By Proposition~\ref{prop:lowerCont} we have for $k$ sufficiently large in terms of $m$ that $p_k\ge \f{(m/1.03)^k}{2k\cdot k!}$.  Let $n$ be large enough in terms of $m$ so that this bound holds for $k\ge t$.  Also let $n$ be large enough so that $\f{\log m}{\log t}\le 1$.  By Lemma~\ref{lem:asy}, if $t\le k\le t+\f{\log m}{100\log t}t\le 1.01t$, then $k!\le t! (1.1)^k m^{t/100}$.  Thus by \eqref{eq:expLower} we have
		\begin{align*}\Pr[L_{m,n}(\pi)\ge k]&\ge 1-\exp\l(-\f{(m/1.03)^k}{4k^2 (1.1)^k m^{t/100}}\r)\ge 1-\exp\l(-\f{m^{.99t}}{4k^2\cdot (1.14)^{1.01t}}\r)\\&\ge 1-\exp\l(-\f{(1.7)^t}{8t^2}\r),\end{align*}
		where this last step used $m\ge 2$.  This quantity is at least $1/2$ for $n$ (and hence $t$) sufficiently large.  Using this together with \eqref{eq:small} for $k\le t-2$ gives, for $n$ sufficiently large in terms of $m$,
		 \[\E[L_{m,n}(\pi)]\ge t-3+\sum_{t\le k\le t+\f{\log m}{100\log t}t} \Pr[L_{m,n}(\pi)\ge k] \ge t-3+\l(\f{\log m}{100\log t}t\r)\cdot \half,\]
		 proving the desired result.
	\end{proof}
	
	
	\section{Concluding Remarks}
	In this paper we solved a conjecture of Diaconis, Graham, He, and Spiro~\cite{DGHS21} by asymptotically determining $\E[L^1_{m,n}(\pi)]$ provided $n$ is sufficiently large in terms of $m$.  Using similar ideas, it is possible to compute the asymptotic limit of the $r^{\text{th}}$ moment $\E[L^1_{m,n}(\pi)^r]$ for any fixed $r$.  Based off of computational evidence for these higher moments, we conjecture the following:

	\begin{conj}\label{conj:strong}
	For all $r\ge 1$, if $n$ is sufficiently large in terms of $m$, then
	\[\E[(L^1_{m,n}(\pi)-\mu)^r]=c_r m^{\floor{r/2}} +O(m^{\floor{r/2}-1}),\]
	where $\mu=\E[L^1_{m,n}(\pi)]$ and
	\[c_r=\begin{cases} \f{r!}{2^{r/2}(r/2)!} & r\tr{ even},\\
	\f{r!}{3\cdot 2^{(r-1)/2}((r-3)/2)!} & r\tr{ odd}.\end{cases}\]
	\end{conj}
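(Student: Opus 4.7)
The plan is to extend the generating-function framework of Theorem~\ref{thm:C1} from the first moment to all cumulants of $L^1_{m,n}(\pi)$, and then to invoke the cumulant-to-moment expansion.  Since the law of $L^1_{m,n}(\pi)$ stabilises as $n\to\infty$, it suffices to work in the limit $n\to\infty$, where $\E[L^r]=\sum_{k\ge 1}(k^r-(k-1)^r)p_{m,k}(-1)$.  Because $y\partial_y$ commutes with both $\Phi$ and the substitution $x=-1$, one has $\sum_k k^j p_{m,k}(-1) = (y\partial_y)^j P_m(-1,y)\big|_{y=1}$, so each raw moment (and therefore each cumulant $\kappa_r$ of $L$) is a finite linear combination of such derivative evaluations.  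One can either work with the raw moments directly or pass to the cumulant generating function $K(t)=\log\frac{1-(1-e^t)P_m(-1,e^t)}{e^t}$.

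For $y$ in a neighbourhood of $1$, the zeroes $\alpha_i(y)^{-1}$ of $g(x,y):=1+y\cdot m!\,x^m E_{m-1}(1/x)$ depend smoothly on $y$ with $\alpha_i(1)=\alpha_i$, and the partial-fraction plus $\Phi$ argument of Proposition~\ref{zeroes} upgrades to $P_m(-1,y)=-\sum_i A_i(y)e^{-\alpha_i(y)}$ for explicit residues $A_i(y)$.  Implicit differentiation of the defining identity $y\cdot m!\,E_{m-1}(\alpha_i(y))+\alpha_i(y)^m=0$ gives $\alpha_i'(1)=-1$ and, recursively, polynomial expressions in $\alpha_i$ for all higher $\alpha_i^{(j)}(1)$ and $A_i^{(j)}(1)$.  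Substituting these back, each $(y\partial_y)^j P_m(-1,y)\big|_{y=1}$ becomes a finite linear combination of sums of the form $\sum_i Q_{r,s}(\alpha_i)e^{-\alpha_i}$ for explicit polynomials $Q_{r,s}$.  The asymptotic analysis of Section~2.2 then applies uniformly to each such sum: split $[m]=S\sqcup L$, absorb the $L$-contribution into $O(e^{-\beta m})$, and on $S$ replace $e^{-\alpha_i}=1/R_m(\alpha_i)$ by the expansion of Lemma~\ref{D3}, evaluating the resulting power sums via Proposition~\ref{D1}(c).

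The conjecture then follows from the classical partition identity
\[\E[(L-\mu)^r]=\sum_{\pi}\prod_{B\in\pi}\kappa_{|B|}(L),\]
with the sum running over set partitions of $[r]$ into blocks of size $\ge 2$.  A partition with $k$ such blocks contributes a term of order $\prod_i \kappa_{s_i}(L)$; provided $\kappa_2(L)=m+O(1)$, $\kappa_3(L)=m+O(1)$, and $\kappa_r(L)=o(m^{\lfloor r/2\rfloor})$ for $r\ge 4$, the dominant partitions are precisely those with all blocks of size 2 (when $r$ is even) or one block of size 3 together with $(r-3)/2$ blocks of size 2 (when $r$ is odd), and the corresponding partition counts are exactly the constants $c_r$ in the statement.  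The main obstacle is therefore the cumulant structure: showing $\kappa_2(L),\kappa_3(L)=m+O(1)$ should be tractable via the explicit asymptotic outlined above, but controlling higher cumulants requires demonstrating that, upon applying Proposition~\ref{D1}(c), the leading powers of $m$ systematically cancel so that $\kappa_r(L)=O(m)$ (rather than $O(m^{r/2})$) for every $r\ge 4$.  Establishing this cancellation directly from the algebraic formulas appears delicate, and an attractive alternative is a probabilistic route: realise $L^1_{m,n}$ as the output of the greedy first-passage algorithm (the number of successful matches of a Markov chain tracking the next target card) and prove a functional central limit theorem with an explicit first Edgeworth correction, from which both the Gaussian even-moment constants and the odd-moment constants emerge simultaneously.
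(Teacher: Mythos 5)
This statement is a \emph{conjecture} in the paper, not a theorem: the authors state it in the Concluding Remarks as an open problem based on computational evidence, and explicitly say they can only prove the $r$th raw moment $\E[L^1_{m,n}(\pi)^r]$ is asymptotic to $m^r$ and ``do not know how to determine the coefficient of $m^{r-1}$'', let alone the centred-moment constants $c_r$. So there is no proof in the paper to compare against, and a correct review must judge your proposal on its own.

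Your framework is sensible and largely matches the heuristic the authors themselves gesture at (``Using similar ideas, it is possible to compute the asymptotic limit of the $r^{\text{th}}$ moment\dots''). Extending $P_m(x,y)$ and the residue formula to $y$ near $1$, implicit differentiation of the root equations, and reduction to exponential sums $\sum_i Q(\alpha_i)e^{-\alpha_i}$ handled by the Conrey--Ghosh splitting and Zemyan's power-sum values is exactly the natural extension of Sections 2.1--2.2, and your identification of the partition combinatorics behind $c_r$ (perfect matchings for even $r$, one triple plus a matching for odd $r$) is the correct way the formula would emerge from a cumulant expansion with $\kappa_2,\kappa_3\approx m$ and higher cumulants subleading.

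The gap is the one you flag yourself, and it is the whole difficulty: you never establish $\kappa_2(L)=m+O(1)$, $\kappa_3(L)=m+O(1)$, and $\kappa_r(L)=o\bigl(m^{\lfloor r/2\rfloor-\lfloor (r-1)/2\rfloor+1}\bigr)$ (concretely, $O(m)$) for $r\ge 4$. These bounds are load-bearing --- without them the partition sum does not collapse to the claimed leading term --- and the proposal only asserts that they ``should be tractable'' and that the required cancellation of higher powers of $m$ ``appears delicate.'' Carrying out the program would require uniform control of the residues $A_i(y)$ and all derivatives $\alpha_i^{(j)}(1)$ over the small-root set $S$, and then showing that the resulting linear combination of power sums from Proposition~\ref{D1}(c) produces the systematic cancellation down to order $m$. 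Nothing in the proposal demonstrates this cancellation, and the authors' own admission that they cannot nail even the $m^{r-1}$ coefficient of the raw moment is evidence that it is genuinely nontrivial. The alternative probabilistic route (functional CLT with Edgeworth correction for the greedy Markov chain) is an interesting idea but is stated at the level of a wish rather than an argument. As written, this is a plausible research plan, not a proof.

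Two smaller remarks. First, for the cumulant identity $\E[(L-\mu)^r]=\sum_\pi\prod_{B\in\pi}\kappa_{|B|}$ restricted to partitions with all blocks of size $\ge 2$, you implicitly use that $\kappa_1(L-\mu)=0$; that is fine, but should be said, since otherwise the identity runs over all set partitions. Second, the interchange of $n\to\infty$ with taking $r$th moments needs a one-line justification (e.g.\ stochastic domination of $L^1_{m,n}$ by the $n\to\infty$ limit plus monotonicity, or uniform integrability from the exponential tail in Lemma~\ref{lem:contUpper}); it is true, but it is not automatic from ``the law stabilises.''
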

	One can show that the standard deviation $\sig$ of $L^1_{m,n}(\pi)$ is asymptotic to $m^{1/2}$.  Thus, this conjecture would imply that the standardized moments $(\f{L^1_{m,n}(\pi)-\mu}{\sig})^r$ converge to 0 for $r$ odd and to $\f{r!}{2^{r/2}(r/2)!}$ for $r$ even.  These are exactly the moments of a standard normal distribution, and actually this fact would imply that $(L_{m,n}^1(\pi)-\mu)/\sig$ converges in distribution to a standardized normal distribution, see for example \cite[Corollary 21.8]{frieze2016introduction}.
	
	Perhaps a first step towards proving Conjecture~\ref{conj:strong} would be to get a better understanding of the (non-centralized) moments $\E[L_{m,n}^1(\pi)^r]$, and to this end we conjecture the following:
	\begin{conj}
	For all $r\ge 1$, if $n$ is sufficiently large in terms of $m$, then \[\E[L^1_{m,n}(\pi)^r]=m^r+{r+1\choose 2} m^{r-1}+O(m^{r-2}).\]
	\end{conj}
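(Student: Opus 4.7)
The plan is to extend the generating-function framework of Section 2.1 and the exponential-sum machinery of Section 2.2 from the first moment to the $r$-th moment. Starting from the identity $\E[X^r] = \sum_k (k^r - (k-1)^r) \Pr[X \ge k]$ for nonnegative integer $X$, and using Observation~\ref{obs1} (which shows $\Pr[L^1_{m,n}(\pi) \ge k]$ is independent of $n$ for $k \le n$), the quantity $\E[L^1_{m,n}(\pi)^r]$ converges as $n \to \infty$ to
\[
\mathcal{L}_m^{1,(r)} := \sum_{k \ge 1} (k^r - (k-1)^r) \frac{h_m(k)}{|\S_{m,k}|},
\]
with an error that is exponentially small in $n$ (since $h_m(k)/|\S_{m,k}| \le m^k/k!$ by Lemma~\ref{lem:contUpper} applied to $\c{W} = \{1, 12, 123, \ldots\}$). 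Thus for $n$ sufficiently large in terms of $m$ it suffices to bound $\mathcal{L}_m^{1,(r)}$.

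Next, I would expand $k^r - (k-1)^r = \sum_{j=0}^{r-1} c_{r,j} (k)_j$ in falling factorials and use the identity $\sum_{k \ge 0} (k)_j a_k = \partial_y^j F(y)|_{y=1}$, where $F(y) := P_m(-1, y)$ and $a_k = h_m(k)/|\S_{m,k}|$. Differentiating $Q_m(x,y) = (1-yq_{m,1}(x))^{-1}$ yields
\[
\partial_y^j P_m(-1, y) \big|_{y=1} = j! \, \Phi\!\left( \frac{q_{m,1}(x)^j}{g(x)^{j+1}} \right) \!\bigg|_{x=-1},
\]
with $g(x) = 1-q_{m,1}(x) = m!x^m E_m(1/x)$. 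A partial-fraction decomposition in $x$ at the simple zeros $\alpha_i^{-1}$ of $g$, combined with the generalisation of Proposition~\ref{zeroes} that $\Phi((1-\alpha x)^{-\ell}) = R_\ell(\alpha x) e^{\alpha x}$ for an explicit polynomial $R_\ell$ of degree $\ell-1$, expresses $\mathcal{L}_m^{1,(r)}$ as a finite combination of sums of the form $\sum_i T(\alpha_i) e^{-\alpha_i}$ for explicit rational functions $T$, generalising the $j=0$ computation in Proposition~\ref{zeroes}.

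Finally, to extract the leading behaviour as $m \to \infty$, I would generalise the Conrey--Ghosh analysis of Section 2.2: partition $[m]$ into indices $S$ where $|\alpha_i|$ is small and $L$ where $|\alpha_i|$ is large, discard the $L$-contribution as exponentially small (as in Lemma~\ref{D2}), and for $i \in S$ replace $e^{-\alpha_i}$ with $1/R_m(\alpha_i)$ and expand via Lemma~\ref{D3}, reducing the asymptotics to power sums $\sum_i \alpha_i^{-t}$ that are explicit by Proposition~\ref{D1}(c) and its extensions. The main obstacle will be the combinatorial bookkeeping: one must verify that after collecting the partial-fraction residues and the coefficients $c_{r,j}$, the result is precisely $m^r + \binom{r+1}{2} m^{r-1} + O(m^{r-2})$. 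Heuristically the $m^{r-1}$ coefficient is determined only by the first two cumulants of $L^1_{m,n}(\pi)$, so the conjecture is morally equivalent to combining $\mathcal{L}_m^1 = m+1+O(1/m)$ (which is Theorem~\ref{thm:C1}(b)) with $\Var(L^1_{m,n}(\pi)) = m + O(1)$; an alternative route would therefore be to establish the variance asymptotic via the $r=2$ case of the above framework and then bound higher cumulants as in Conjecture~\ref{conj:strong}.
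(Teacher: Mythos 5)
This statement is labeled as a conjecture in the paper, and the authors explicitly say ``We can prove that the $r^{\text{th}}$ moment is asymptotic to $m^r$, but we do not know how to determine the coefficient of $m^{r-1}$.'' So there is no proof in the paper against which to compare your attempt, and your attempt itself is not a proof either --- it is a roadmap that defers the decisive step.

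Your generating-function setup is correct and is the natural extension of Section~2. The identity $\E[X^r]=\sum_k(k^r-(k-1)^r)\Pr[X\ge k]$, the expansion into falling factorials, the formula $\partial_y^j P_m(-1,y)\big|_{y=1}=j!\,\Phi\!\left(q_{m,1}(x)^j/g(x)^{j+1}\right)\big|_{x=-1}$ with $g=1-q_{m,1}$, and the fact that $\Phi\!\left((1-\alpha x)^{-\ell}\right)=R_\ell(\alpha x)e^{\alpha x}$ for a degree-$(\ell-1)$ polynomial $R_\ell$ all check out. The problem is that you then write ``the main obstacle will be the combinatorial bookkeeping: one must verify that \dots the result is precisely $m^r+\binom{r+1}{2}m^{r-1}+O(m^{r-2})$.'' That bookkeeping \emph{is} the proof. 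To extract the $m^{r-1}$ coefficient you need, for each $j\le r-1$, the partial-fraction residues of $q_{m,1}^j/g^{j+1}$ at the (now order-$(j+1)$) poles $\alpha_i^{-1}$, which involve higher derivatives of $g$ at $\alpha_i^{-1}$ --- not just $g'(\alpha_i^{-1})=\alpha_i^2$ as in Proposition~\ref{zeroes} --- and then a Conrey--Ghosh-style asymptotic for sums $\sum_i T(\alpha_i)/R_m(\alpha_i)$ with $T$ a rational function whose degree grows with $r$. Nothing in Proposition~\ref{D1}(c) (which only gives power sums up to $t=m+2$) obviously suffices to close this computation; this is almost certainly exactly where the authors themselves got stuck.

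Your ``alternative route'' has the same issue. The heuristic that the $m^{r-1}$ coefficient is governed by $\mu=m+1+O(1/m)$ and $\sigma^2=m+O(1)$ is arithmetically right: $\mu^r+\binom{r}{2}\mu^{r-2}\sigma^2 = m^r + \left(r+\binom{r}{2}\right)m^{r-1}+O(m^{r-2}) = m^r+\binom{r+1}{2}m^{r-1}+O(m^{r-2})$, and the central moments $\E[(X-\mu)^j]$ for $j\ge 3$ contribute only $O(m^{r-2})$ \emph{provided} $\E[(X-\mu)^j]=O(m^{\lfloor j/2\rfloor})$. But the paper does not prove $\sigma^2=m+O(1)$ (it only asserts $\sigma\sim m^{1/2}$ without a second-order estimate), and the needed bound on higher central moments is precisely (a weakening of) Conjecture~\ref{conj:strong}, which is also unproven. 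Citing Conjecture~\ref{conj:strong} as the tool for bounding higher cumulants makes the argument circular. In short: your plan identifies the right framework and the right heuristic, but every step that would turn it into a theorem is left open, and those are exactly the steps the authors report being unable to carry out.
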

	We can prove that the $r^{\text{th}}$ moment is asymptotic to $m^r$, but we do not know how to determine the coefficient of $m^{r-1}$.  We were unable to observe any pattern for the coefficients of lower order terms.

	In this paper, we considered continuously increasing subsequences in multiset permutations, and it is natural to consider other types of subsequences in multiset permutations.  Perhaps the most natural to consider is the following:
	\begin{quest}\label{quest:longestInc}
	For $\pi\in \S_{m,n}$, let $\widetilde{L}_{m,n}(\pi)$ denote the length of a longest increasing subsequence in $\pi$.  What is $\E[\widetilde{L}_{m,n}(\pi)]$ asymptotic to when $m$ is fixed?
	\end{quest}
	When $m=1$ it is well known that $\E[\widetilde{L}_{1,n}(\pi)]\sim 2\sqrt{n}$ (see~\cite{Rom}), so Question~\ref{quest:longestInc} is a natural generalization of this classical problem.  See also recent work of Almeanazel and Johnson \cite{AMJ20} for some results concerning the distribution of $\widetilde{L}_{m,n}(\pi)$.
	
	\textbf{Acknowledgments}.  We thank S. Venkitesh for fruitful conversations and Persi Diaconis for comments regarding an earlier draft.  We thank the Graduate Student Combinatorics Conference 2021 for hosting an open problem session, at which the fourth author presented the problem which inspired this work.
	 \bibliographystyle{alpha}
	 \bibliography{references}

\end{document}